\documentclass[12pt,oneside,reqno]{amsart}

\usepackage{comment}
\usepackage{mathtools,nccmath}
\usepackage{afterpage}
\usepackage{etoolbox}
\usepackage[english]{babel}
\usepackage{amssymb,amsmath,amsthm}
\usepackage{bbm}
\usepackage{fancyhdr}
\usepackage{enumerate}
\usepackage{ifthen}
\usepackage{color}
\provideboolean{shownotes} 
\setboolean{shownotes}{true}
\usepackage{hyperref}
\usepackage{graphicx}
\graphicspath{ {./images/} }

\newcommand{\margnote}[1]{
	\ifthenelse{\boolean{shownotes}}%
	{\marginpar{\raggedright\tiny\texttt{#1}}}%
	{}%
}

\newcommand{\hole}[1]{
	\ifthenelse{\boolean{shownotes}}%
	{\begin{center} \fbox{ \rule {.25cm}{0cm}
				\rule[-.1cm]{0cm}{.4cm} \parbox{.85\textwidth}{\begin{center}
						\texttt{#1}\end{center}} \rule {.25cm}{0cm}}\end{center}}
	{}
}
%
%

\theoremstyle{plain} \newtheorem{Theorem}{Theorem}
\theoremstyle{plain} 
\theoremstyle{plain} \newtheorem{Lemma}{Lemma}
\theoremstyle{plain} \newtheorem{Proposition}{Proposition}
\theoremstyle{definition} \newtheorem{Definition}{Definition}
\theoremstyle{definition}
\theoremstyle{definition}
\theoremstyle{definition} 

\newcommand{\Rd}{{\mathbb{R}^d}}

\newcommand{\Zz}{{\mathbb{Z}}}

\newcommand{\vsp}[1]{\vspace{0.5cm}\par}

\numberwithin{equation}{section}
\numberwithin{Theorem}{section}
\numberwithin{Proposition}{section}
\numberwithin{Lemma}{section}
\numberwithin{Definition}{section}
\numberwithin{Remark}{section}

\newcommand{\ra}{\rangle}
\newcommand{\la}{\langle}

\allowdisplaybreaks

\usepackage{amssymb,amsmath,bm}
\usepackage{graphicx}
\usepackage{caption}
\usepackage{color}
\usepackage[usenames,dvipsnames]{xcolor}
\usepackage{tikz}
\oddsidemargin=-.0cm
\evensidemargin=-.0cm
\textwidth=16cm
\textheight=215mm
\topmargin=0cm
\parskip1.5mm 
%
%
\newtheorem{proposition}{Proposition}[section]
\newtheorem{theorem}[proposition]{Theorem}

\newtheorem{lemma}[proposition]{Lemma}

\numberwithin{equation}{section}
%
%
\newcommand{\epsi}{\varepsilon}
\newcommand{\UUU}{\color{black}}

\newcommand{\EEE}{\color{black}}

\newcommand{\Nz}{{\mathbb N}}
\newcommand{\Rz}{{\mathbb R}}
\newcommand{\Rzn}{{\mathbb R}^{3 \times 3}}
\newcommand{\Rzd}{\Rz^{3\times 3}_{\rm dev}}
\newcommand{\Rzs}{\Rz^{3\times 3}_{\rm sym}}
\newcommand{\Rzsp}{\Rz^{3\times 3}_{\rm sym+}}
\newcommand{\haz}{\widehat}

\renewcommand{\d}{{\rm d}}
\newcommand{\SL}{\text{\rm SL}(3)}

\newcommand{\SO}{\text{\rm SO}(3)}
 
\newcommand{\GLp}{\text{\rm GL}^+(3)}
\newcommand{\GLps}{\text{\rm GL}^+_{\rm sym}(3)}

\newcommand{\weak}{\rightharpoonup}

\newcommand{\tr}{\textrm{tr}\,}
\newcommand{\cof}{\textrm{cof\,}}
\newcommand{\dev}{{\rm dev}\,} 
\newcommand{\TT}{\top}

\newcommand{\ove}{\overline}

\def\We{W_{ e}}
\def\Wh{W_{  p}}

\def\bP{P} 
\def\bPi{\Pi} 
\def\bF{ F} 
\def\bN{N} 
 
\def\bFe{F_{\rm e}}
\def\one{{I}}
\def\bbC{\mathbb{C}}

\def\bbG{\mathbb{G}}

%
\title[Nonassociative finite plasticity]
{Quasistatic nonassociative plasticity at finite strains} 

\author[U. Stefanelli]{Ulisse Stefanelli} 
	\address[Ulisse Stefanelli]{University of
		Vienna, Faculty of Mathematics,
                Oskar-Morgenstern-Platz 1, A-1090 Vienna, Austria. 
		University of Vienna, Vienna Research Platform on Accelerating
		Photoreaction Discovery, W\"ahringerstra\ss e 17, 1090 Wien, Austria.
	 Istituto di	Matematica Applicata e Tecnologie Informatiche {\it E. Magenes}, via
		Ferrata 1, I-27100 Pavia, Italy.
	}
	\email{ulisse.stefanelli@univie.ac.at}
	\urladdr{http://www.mat.univie.ac.at/$\sim$stefanelli}
        \author[A. Vikelis] {Andreas  Vikelis}
\address[Andreas  Vikelis]{University of Vienna,  Faculty of Mathematics, 
Oskar-Morgenstern-Platz 1, A-1090 Vienna, Austria. Czech Technical University in Prague, Faculty of Civil Engineering, Department of Mechanics, Th\'akurova 7, Prague 6, 166 29, Czech Republic.}
\email{andreas.panagiotis.vikelis@univie.ac.at}
{ }
\subjclass[2010]{49J45, 49S05, 74C15}
\keywords{Finite-strain plasticity,  
  quasistatic evolution, energetic solutions, small-deformation limit}
%
%
\begin{document}
\begin{abstract}
We investigate finite-strain elastoplastic evolution in
the nonassociative setting. The constitutive material model is
formulated in variational terms and coupled with the quasistatic
equilibrium system. We introduce measure-valued energetic solutions and prove
their existence via a time discretization approach. The existence
theory hinges on a suitable regularization of the dissipation term via
a space-time mollification. Eventually, we discuss the possibility of
solving the problem in the setting of functions, instead of measures. 
\end{abstract}
\maketitle

\section{Introduction}

The {\it state} of an elastoplastic material  at finite strains is described
in terms of its {\it deformation} $y:\Omega \to \Rz^3 $ with respect
to the reference configuration $\Omega \subset \Rz^3$ and of
the {\it plastic  strain} $\bP : \Omega \to
\SL:=\{ A \in \Rzn \;| \, {\rm det} \,A=1\}$, encoding the
information on previous plastic transformations.
Evolution is governed by the competition between
energy-storage and dissipation mechanisms. 
 We assume the classical {\it multiplicative
  decomposition} $\nabla y =F_e P$, where $F_e$ stands for the
{\it elastic deformation tensor},
see \cite{Kroener,Lee}.  The stored energy of the medium 
is  assumed to have the form
\begin{equation}
\mathcal{W}(y,P)=\int_\Omega \We( \nabla y
 \bP^{-1}) \,\d x+ \int_\Omega\Wh(\bP)\, \d x + \frac{\mu}{ q_ r }\int_\Omega |\nabla \bP |^{ q_ r }
 \, \d x. \label{eq:WW}
\end{equation}
Here,  $W_e$ and $W_p$ stand for the {\it elastic} and the {\it plastic energy
density}, respectively, while the gradient term features an additional length-scale
parameter $\mu$, in the spirit of so-called {\it gradient-plasticity}
theories \cite{Fleck,Fleck2,Aifantis}.

The variations of the energy with respect to the variables $\nabla y$
and $P$ correspond to the (generalized) forces driving the
evolution. In particular, $\Pi=\partial_{\nabla y}\mathcal{W}$ is the
first Piola-Kirchhoff stress, which is assumed to fulfill the
quasistatic system $-{\rm div}\, \Pi =b$, for a given force density
$b$. On the other hand, the thermodynamic force
$N=-\partial_P\mathcal{W}$ associated to $P$ fulfill the {\it plastic
  flow rule}, expressed in complementarity form as
\begin{equation}\dot P = \zeta \, \partial_N g(P,N),\quad \zeta\geq 0, \quad
f(P,N)\leq 0, \quad \zeta \, f(P,N)=0.\label{eq:zeta}
\end{equation}
Here, $f$ is the so-called {\it yield function} and its level set
$\{f(P,\cdot)\leq 0\}$ is the {\it elastic domain}. The potential $g$
instead specifies the plasticization direction via its gradient
$\partial_N g(P,N)$.

The {\it associative} case corresponds to the
choice $f\equiv g$ in \eqref{eq:zeta} and has already been
extensively  studied.  The static problem for $\mu=0$ has been firstly considered
in \cite{Mielke04b} under some
restrictions on the choice of the driving functionals.  The incremental problem has been addressed in
\cite{compos} for $\mu=0$ and in \cite{Mielke2005} by including some
gradient penalization, i.e., $\mu>0$, although
in terms of ${\rm curl}\, P$ only. The existence of quasistatic
evolution in terms of {\it energetic solutions} \cite{Mielke2015} had been
tackled in \cite{compos2} for $\mu=0$ and in \cite{Mainik} for
$\mu>0$ and $ q_ r =2$. Linearization for small strains  can be found
in 
\cite{ms5}. For $\mu>0$, a number of additional results are available,
ranging from the symmetric formulation in terms of
$P^\top P$ 
\cite{cplas1,cplas2}, the alternative multiplicative
decomposition $\nabla y =PF_e$ \cite{Davoli}, shape memory alloys
\cite{ams,Grandi2}, dimension reduction \cite{davoli.mora},
viscoplasticity \cite{MRS_vis}, and numerical
approximations~\cite{Mielke16}. A recent approach, focusing on
modeling the evolution of dislocation structures is in
\cite{Rindler1,Rindler2}. \UUU See also \cite{Conti1,Conti2} for some
model of dislocations concentrating on curves, \cite{Garroni,Ginster}
and \cite{Scardia1, Scardia2, Ginster1} for the derivation of a 
strain-gradient model via dislocation upscaling, in the linearized and
in the nonlinear setting, respectively, and \cite{Kupferman} for another
result on dislocation homogenization. 
\EEE

The focus of this paper is on the general  {\it nonassociative}
case instead, where $f\not \equiv g$. To the best of our knowledge, in
the nonassociative setting the only available 
quasistatic
existence results are limited to the linearized, infinitesimal strain
case \cite{Mora,Mora2}. Here the geometrically and elastically
nonlinear case
is considered  instead.  Our main result is the existence of energetic
solutions for the quasistatic evolution problem. This is classically obtained via
a time-discretization approach, based on a sequence of incremental
problems.

Compared with the classical associative case, the nonassociative
setting presents some distinctive additional intricacy. First and
foremost, the mismatch $f\not \equiv g$ in the flow rule
\eqref{eq:zeta} calls for some specific variational reformulation of
the constitutive equation. This in
turn makes dissipative dynamics nonlinearly dependent on the current deformation
state $\nabla y$. In the frame of \eqref{eq:WW}, the variable
$\nabla y$ lacks strong compactness, one is then forced to regularize
the model by regularizing the dependence of the dissipation on $\nabla
y$ via a space-time mollification procedure. In fact, such
mollification was needed in the linearized case  \cite{Mora,Mora2}, as
well. On the other hand, in the finite-strain setting one is
additionally forced to resort to measure-valued solutions.

The paper is organized as follows. After fixing some notation in
Section \ref{sec:notation}, we introduce and discuss the constitutive
elastoplastic material model in Section \ref{sec:const}. The full
quasistatic evolution problem is presented in Section \ref{sec:newq},
together with a discussion on the simplifications and regularizations
that are needed for the analysis. In Section \ref{sec:energy}, we give
the  details  of the variational formulation, present assumptions, define
measure-valued solutions, and state the main existence result, namely,
Theorem \ref{thm:existencemv}. In addition, a statement in terms of
functions, Proposition \ref{lemma:functions} is presented. The proofs of Theorem \ref{thm:existencemv} and Proposition
\ref{lemma:functions} are in Section \ref{sec:proof} and
\ref{proof:lemma:functions}, respectively.
Eventually, the Appendix contains the statement and the proof of an
extended Helly Selection Principle, which is used in the existence argument.

%
\section{Notation}\label{sec:notation}
%
In this section we fix the notation used in the paper and we present
our nonassociative elastoplastic model at finite strains. 

\subsection{Tensors and function spaces}
%
 In our analysis, we focus  on the three-dimensional setting and denote the space of
$2$-, $3$-, and $4$-tensors in $\Rz^3$ by
$\Rz^{3\times 3}$, $\Rz^{3\times 3\times 3}$, and $\Rz^{3\times
  3\times 3 \times 3}$, respectively. For all $  A\in \Rz^{3\times 3}$ we define the {\it
  trace} as $\tr  A : = A_{ii}$
(summation convention on repeated indices), the {\it deviatoric part} as
${\rm dev} A  =  A - (\tr  A) \one
/3$ where $\one$ is the identity $2$-tensor, and the (Frobenius)
 norm as $| A|^2 := \tr ( A ^\TT A)
 $ where the symbol $\TT$ denotes transposition. The contraction product between $2$-tensors is $A {:}
  B  := A_{ij}B_{ij}$ and we classically denote the
 scalar product of vectors in $\Rd$ by $a{\cdot} b:=a_ib_i$. The
 symbols $\Rzs$ and $\Rzsp$ stand for the  subsets  of $\Rz^{3 \times
   3}$ of symmetric tensors and   of  symmetric positive-definite tensors,
 respectively. Moreover, $\Rzd$ indicates the space of symmetric
 deviatoric tensors, namely $\Rzd:=\{A \in  \Rzs  \; |\; \tr
  A =0\}$.  The symbol $A^s=(A+A^\TT)/2$ denotes the symmetric
  part of the tensor $A\in
  \Rz^{3\times 3}$.   We shall use also the following tensor sets
\begin{align*}
  &\SL:=\{ A \in \Rzn \;|\; \det  A
  =1\}, &&\SO:=\{ A  \in  \SL  \;|\;  A^{-1} =
           A ^\TT  \},\\
  & \GLp:=\{ A  \in \Rzn \;|\; \det  A >0 \},  &&\GLps:= \GLp\cap \Rzs.
\nonumber
\end{align*}
The tensor $\cof A$ is the {\it cofactor matrix} of $A $. For $A$ 
invertible we have that $\cof A=(\det A)\,A^{-\TT}$.

The contraction between two 3-tensors $A,\, B \in \Rz^{3\times 3
  \times 3}$ is defined as $A \vdots B = A_{ijk} B_{ijk}$ and the
product between a 4-tensor $\bbC \in \Rz^{3\times 3
  \times 3\times 3}$ and a 2-tensor $D\in \Rz^{3\times 3}$ is defined
componentwise as $\bbC D_{ij} = \bbC_{ij\ell k} D_{\ell k}$.
We use the same symbol $|\cdot|$ to denote the Frobenius norm of 3- and
4-tensors.

In the following, we denote by $\partial \varphi$ the subdifferential
of the smooth or  of the  convex, proper, and lower semicontinuous function
$\varphi: E \to (-\infty,\infty]$ where $E$ is a normed space with dual
$E^*$ and duality pairing $\langle \cdot, \cdot \rangle $
\cite{Brezis73}. In particular, $ y^* \in \partial \varphi (x)$ iff
$\varphi(x) <\infty$ and 
$$ \langle y^*,w{-}x\rangle \leq \varphi(w) - \varphi(x) \quad
\forall w \in E.$$

The symbol $\|\cdot \|_E$ denotes the norm in the normed space $E$. We
use the classical notation $L^p$ and
$W^{1,r}$  for Lebesgue and Sobolev spaces. Whenever clear from the context, we simply indicate their
norms by $\| \cdot \|_p$ and $\| \cdot \|_{1,r}$, without specifying
target spaces. The symbols $L^p(0,T;E)$, $C([0,T];E)$, and $C^1([0,T];E)$ denote
classical Bochner spaces of Lebesgue $p$-integrable, continuous on
$[0,T]$, and continuously differentiable functions of time with
values in $E$.



We use the symbol $\nabla$ to indicate the Jacobian of a tensor
valued-function on the open set $\Omega\subset \Rz^3$. In particular, for $y :
\Omega \to \Rz^3$ and $P:\Omega \to \Rz^{3 \times 3}$ we have
 $(\nabla y)_{ij} = \partial y_i /\partial x_j$ and $(\nabla
 P)_{ijk} = \partial P_{ij}/\partial x_k$.
 

\subsection{A caveat on notation}
In the following, we use the same symbol $c$ to indicate a generic positive
constant depending only on data. Note that the value of $c$ can change
from line to line.

\section{Constitutive model}\label{sec:const}

This section is devoted to the introduction of the constitutive
model. 
We denote by $F \in \GLp$ and $P \in \SL$ the {\it deformation gradient} and
the {\it plastic
strain}, respectively, and we assume plasticity
to be isochoric, namely, $\det P=1$. The classical
multiplicative decomposition~\cite{Kroener,Lee} 
\begin{align*}
    F=F_e P
\end{align*}
is postulated, 
where $F_e\in  {\rm GL}^+(3)$ denotes the {\it elastic strain}. 

\subsection{Energy}
The evolution of the elastoplastic medium results from the
interplay between energy-storage mechanisms and plastic-dissipation
effects. The {\it local energy density} $W$ of the medium is assumed to be
additively decomposed as 
$$W(F,P) =  W_e( F_e) + W_p(P)= W_e( F
P^{-1}) + W_p(P),$$
namely, into an elastic and a plastic contribution, indicated by $W_e$ and $W_p$,
respectively. Both the 
{\it elastic energy density} $W_e: {\rm GL}^+(3)\to [0,\infty)$
and the {\it plastic energy density} $W_p: \SL \to [0,\infty)$ are asked to be smooth. Moreover, $W_e$ is
assumed to be {\it frame indifferent}, i.e., $W_e(RF_e)=W_e(F_e)$
for all $F_e\in \GLp$ and $R\in {\rm SO}(3)$.

\subsection{Constitutive equations}
Following the classical Coleman-Noll procedure \cite{Coleman1974}, we
introduce the {\it constitutive relations} which identify the
thermodynamical forces associated with the deformation gradient $F$
and the plastic strain $P$. The evolution of $F$ is driven by the
first Piola-Kirchhoff stress $\Pi$ given by
\begin{equation}\label{eqn:piola}
    \bPi  :=\partial_{\bF}W(\bF,\bP)= \partial_{\bFe} \We(\bF \bP^{-1}) \bP^{-\TT},
\end{equation}
while the thermodynamic force conjugated to $P$ reads 
\begin{align}\label{eq:N}
\bN &:=-\partial_{\bP}W(\bF,\bP)=-\partial_{\bFe} 
      \We(\bFe):\partial_{\bP}\bFe-\partial_{\bP}\Wh(\bP)\nonumber\\
  &= \bP^{-\TT}
\bF^\TT \partial_{\bFe} 
\We(\bFe) \bP^{-\TT}-\partial_{\bP}\Wh(\bP).
\end{align}


\subsection{Flow rule} Plastic activation is classically described by
specifying the  {\it yield  function}
\begin{align*}
  &f =f(\bP,\bN): \SL\times \Rzn
    \to \Rz, \ \text{with} \  \\
  &\quad \bN \mapsto
f(\bP,\bN) \ \text{convex and} \ f(\bP,\, 0)<0\ \text{for all} \ \bP
\in \SL.\end{align*}
In
particular, the {\it elastic
domain} corresponds to the sublevel $\{f \leq 0\}$.

The direction of plastic trasformation is prescribed in terms of a
second {\it plastic potential}
\begin{align*}
  &g =g(\bP,\bN): \SL\times \Rzn
    \to \Rz, \ \text{with} \  \\
  &\quad \bN \mapsto
g(\bP,\bN) \ \text{convex and} \ g(\bP,\, 0) =  0\ \text{for all} \ \bP
    \in \SL.\end{align*}

  Moving from  the conjugacy of $N$ and
$P$ in \eqref{eq:N}, the plastic flow rule is expressed in
complementary form as 
\begin{equation}
  \label{eq:complementary}
  \dot P = \zeta \,\partial_{\bN} g(\bP,\bN), \quad \zeta \geq 0, \quad
  f(\bP,\bN) \leq 0, \quad \zeta f(\bP,\bN) = 0.
\end{equation}
The occurrence of the two distinct potentials $f$ and $g$ is the
specific trait of nonassociative plasticity: the plastic flow at $(P,N)$ is
activated in terms of $f$ while the plastic evolution $\dot P$ follows
the normal cone of the convex sublevel $\{\haz N \in \Rz^{3\times 3}\:
| \: g(P,\haz N)\leq g(P,N)\}$. 

In order to provide a compact reformulation of  the complementary conditions
\eqref{eq:complementary} in subdifferential form, we follow the
analysis in \cite{Laborde1,Laborde2}, see also \cite{Mora,Ulloa2021},
and define for all $(P,N)\in
\SL \times \Rz^{3\times 3}$ the function
\begin{equation}
r(P,N):= g(P,N) - f(P,N),\label{eq:ar}
\end{equation} 
as well as the
(possibly empty) convex set
$$L(P,N):=\{ \haz
  N\in \Rz^{3\times 3} \: |  \: g(P, \haz N) \leq
                                                    r(P,N)\}.$$
                                                    Note that we have
\cite{Laborde1,Laborde2}
         $$ N \in \{ f(P,\cdot) \leq 0\} \ \Leftrightarrow \ N \in
         L(P,N)$$
so that the set $L(P,N)$ is surely nonempty for all $N$ belonging to the elastic domain $\{ f(P,\cdot) \leq 0\}$. 
 The complementarity conditions
\eqref{eq:complementary} can be equivalently rewritten as
$$\dot P \in \partial I_{L(P,N)}(N)$$
where $ I_{L(P,N)}$ stands for the indicator function of the convex
set $L(P,N)$.
By passing to its equivalent dual form we get 
\begin{equation}
\bN \in \partial_{\dot P}  R(\bP, \bN , \dot
P) \label{combine1} 
\end{equation}
where the state-dependent  {\it infinitesimal
  dissipation} $R:\SL \times \Rz^{3\times 3} 
\times \Rz^{3\times 3}\to [0,\infty]$ reads
\begin{equation}
  \label{eq:locdiss}
  R( P,  N, \dot P):= \sup \big\{ \dot P :\haz N \:
  |\: 
  \haz N\in L(P,N) \big\}.
\end{equation}
Taking into account the definition \eqref{eq:N} of $N$, the flow rule
can be expressed in the compact form  
\begin{align}
    \partial_{\dot P}  R(\bP, \bN , \dot P)+ \partial_{\bP}W(\bF,\bP) \ni 0.\label{eq:flow0}
\end{align}

%
\subsection{An example} \label{sec:vonMises} In order to make the
discussion more concrete and to motivate the assumptions below,
we present here an 
example of flow rule inspired by the
classical von Mises theory \cite{Gurtin2010,Mandel1972}. Let the yield
function $f:\SL\times\mathbb{R}^{3\times 3}\to \mathbb{R}$ and the
plastic potential $g:\SL\times\mathbb{R}^{3\times 3}\to \mathbb{R}$ be
defined as
\begin{align*}
  f(\bP,\bN) &= |\bbG\,\dev(\bN \bP^\TT)| - r_0,\\
  g(\bP,\bN) &= |\dev(\bN \bP^\TT)|, 
\end{align*}
respectively, where $r_0>0$ is given and the  4-tensor  $\bbG$ is positive definite. The function $r $ from \eqref{eq:ar} reads
$$r(\bP,\bN)= g(\bP,\bN) -f(\bP,\bN)= |\dev(\bN \bP^\TT)| -
|\bbG\,\dev(\bN \bP^\TT)| +r_0.  $$
By imposing $ |\bbG|\leq 1
$ have that 
\begin{equation}
  \label{eq:h}
  r_0 \leq r(\bP,\bN) \leq r_0 + (1+|\bbG|)\, |\dev(\bN \bP^\TT)| \quad
  \text{for all} \,\,(\bP,\bN) \in
\SL\times \Rzn.
\end{equation}
The  infinitesimal dissipation $R$ from \eqref{eq:locdiss} reads
\begin{align}
    R( P,  N, \dot P)&=\sup  \big\{\dot PP^{-1}:B \: |  \: |\dev B|\leq r(
    P, N) \big\}\nonumber\\
  &=
  \left\{
    \begin{array}{ll}
     r( P,  N) |
      \dot P \bP^{-1}| & \text{if} \ \ \tr(  \dot P \bP^{-1})=0\\[1mm]
      \infty&\text{otherwise}.
    \end{array}
    \right. 
  \label{eq:locadiss2}
\end{align}

\subsection{Dissipativity}
Let us now check that the constitutive equations
\eqref{eqn:piola}--\eqref{eq:N} together with the flow rule \eqref{combine1} give
rise to a dissipative model. To this aim, assuming sufficient smoothness we
compute the variation of the internal energy $t \mapsto W(t) - \bPi (t): \dot F(t)$ along a trajectory obtaining

\begin{align}
 & \frac{\d}{\d t} W(\bF,\bP) - \bPi : \dot F = \left(\partial_{\bF} W(\bF,\bP)
                                               - \bPi\right): \dot \bF
                                                -\bN : \dot
                                               P \nonumber\\
  &\quad=  -\bN : \dot
                                               P  =
    -R(\bP,\bN,\dot P) - I_{L(P,N)}(N) =   -R(\bP,\bN,\dot P) \leq 0,
\label{Clausius1}
\end{align} 
confirming that $R(\bP,\bN,\dot P)$ is the  (nonnegative) infinitesimal
dissipation in the system.

\subsection{Linearization}\label{sec:linearization}  In this
section, we show that the finite-strain model reduces to the
infinitesimal, linearized-strain one of \cite{Ulloa2021} in the small-loading
limit. To this end, for the sole scope of this section we strengthen the requests on energy densities and plastic
potentials as follows
\begin{align} 
  &W_e\in C^2(\GLp), \ W_p \in C^2(\SL),  \nonumber\\
  &\quad  \partial_{F_e}W_e(I)=\partial_PW_p(I)=0, \  \mathbb C : =
    \partial_{F_e}^2W_e(I)>0, \  \mathbb H : = \partial_{P}^2W_p(I)>0, \label{eq:Wc2}\\
  &f,\, g \  \ \text{continuous},\ \ N \mapsto f(P,N)-f(P,0) \ \ \text{and}  \ \  
    N \mapsto g(P,N) \nonumber\\
  &\qquad \text{positively $1$-homogeneous for all}
                                            \  P \in \SL. \label{eq:poshom}
\end{align}
Note that these are compatible with  the von Mises example of Section
\ref{sec:vonMises}.

Given the finite-strain deformation gradient $F$ and the plastic strain
$P$ we define their small-strain analogues $\eta\in \Rz^{3\times 3}$ and
$p\in \Rz^{3\times 3}$ by posing 
$$F=I+\epsi \eta  \ \ \text{and} \ \ P=\UUU \text{exp}(\epsi p)$$
\UUU where $\text{exp}$ stands for the exponential matrix and
$\epsi>0$ is assumed to be small.  As 
$P\in \SL$, for all $\epsi>0$ one has $1=\det P = e^{\epsi\, {\rm
    tr}\, p}$. This implies that \EEE ${\rm tr} \, p = 0$, namely, $p\in
\Rz^{3\times 3}_{\rm dev}$.

The
constitutive relation \eqref{eqn:piola} is rewritten in terms of $(\eta ,p)$ as
$$\Pi = \partial_FW(F,P) = \partial_{F_e}W_e\big((I+\epsi \eta) \UUU\,\text{exp}(-\epsi p)\EEE\big)\UUU\,\text{exp}(-\epsi p^\top)\EEE.$$
As $\epsi \to 0$ one has $(I+\epsi \eta) \,\UUU\text{exp}(-\epsi p)\EEE \sim (I+\epsi \eta) (I-\epsi p)\EEE= I + \epsi (\eta -p) - \epsi^2\eta p$, as well as $\UUU\text{exp}(-\epsi p^\top)\EEE\sim I- {\epsi} p^\TT$.
By Taylor expanding and using $\partial_{F_e}W_e(I)=0$ from \eqref{eq:Wc2}, one gets
\begin{align*}
  &\Pi \sim  \partial_{F_e}W_e(I + \epsi (\eta -p) - \epsi^2\eta p)
    \sim \epsi \mathbb C(\eta - p) = \epsi \mathbb C(\eta - p)^{s}.
\end{align*}
The fourth-order tensor  $\mathbb C$  plays the role of the linearized
elasticity tensor. It is major symmetric, i.e.,
$\mathbb C_{ijk\ell} = \mathbb C_{k\ell ij}$, as it
is a Hessian, and minor symmetric, $\mathbb C_{ijk\ell} =
\mathbb C_{ij\ell k}$,  as a consequence of frame
indifference.
The linearized stress $\sigma=\Pi/\epsi$ takes the 
form $\sigma = \mathbb C(\eta - p)^{s} $.

The thermodynamic force conjugated to $P=\UUU\,\text{exp}(\epsi p)\EEE$ reads from
\eqref{eq:N} as
\begin{align*}
  N&= \UUU\text{exp}(-\epsi p^\top)\EEE (I + \epsi \eta)^\TT
  \partial_{F_e}W_e\big((I+\epsi \eta) \UUU\text{exp}(-\epsi p)\EEE\big) \,\UUU\text{exp}(-\epsi p^\top)\EEE-
  \partial_PW_p(\UUU\text{exp}(\epsi p)\EEE)\\
   &\sim \epsi\mathbb C(\eta - p)^s - \epsi\mathbb H p.
\end{align*}
In particular, the linearized thermodynamic force $n=N/\epsi$ takes the form
$n=\sigma - \mathbb H p$. The fourth-order tensor $\mathbb H$ is
major symmetric and corresponds to the linearized kinetic hardening
tensor. 

In contrast to the analysis of the constitutive relations, the
linearization of the flow rule calls for rescaling the yield stress $f(P,0)$
in order to allow for plastic evolution in the small-strain limit. In
particular, we redefine
$$f_\epsi(P,N) = f(P,N) - f(P,0)+\epsi f(P,0)$$
Taking advantage of the positive homogeneity from
\eqref{eq:poshom} one checks that
\begin{align*}
  f_\epsi(P,\haz N)  = f_\epsi(\UUU \text{exp}(\epsi p)\EEE,
  \epsi \haz n)\leq 0 \
  \Leftrightarrow \  \epsi (f(\UUU \text{exp}(\epsi p)\EEE,
 \haz n)-f(\UUU \text{exp}(\epsi p)\EEE,0)) + \epsi f(\UUU \text{exp}(\epsi p)\EEE,0)\leq 0.
\end{align*}
In the limit $\epsi \to 0$ the rescaled elastic domain hence reads
$\{\haz n \in \Rz^{3\times 3}\, | \, f(I,\haz n) \leq 0\}$ and is \UUU
independent of \EEE
$p$. Correspondingly, by letting  
$$r_\epsi(P,N) = g(P,N)-f_\epsi(P,N) \ \ \text{and} \ \ L_\epsi(P,N) =
\{\haz N \in \Rz^{3\times 3}\, |\, g(P,\haz N) \leq r_\epsi(P,N)\}$$
we find
$$\haz N  = \epsi \haz n \in L_\epsi(P,N) \ \Leftrightarrow \
g(I+\epsi p,\haz n) \leq r(I+\epsi p, n).$$
By taking the limit $\epsi \to 0$, the linearization of the relation $\haz N \in
L_\epsi (P,N)$ reads $$\haz n \in L_0(n):=\{\haz n \in
\Rz^{3\times 3} \, | \, g(I,\haz n) \leq r(I,n)\}.$$
Note that the
latter set $L_0$ still depends on the linearized thermodynamic force $n$ but it
is independent of the linearized plastic strain $p$.

The infinitesimal dissipation is linearized as follows
\begin{align*}
  R(P,N,\dot P) &= R (\UUU \text{exp}(\epsi p)\EEE,\epsi n,\epsi \dot p) = \sup\{\epsi \dot
  p:\epsi \haz n\, | \,\epsi \haz n\in L_\epsi (\UUU \text{exp}(\epsi p)\EEE,\epsi n)\}\\
  &\sim \epsi^2 \sup \{\dot p:\haz n \,| \, \haz n \in
    L_0(n)\}=:\epsi^2R_0(n,\dot p)
\end{align*}
In the linearization limit, the infinitesimal dissipation turns out to
be independent of $p$. Eventually, the \UUU flow rule \EEE \eqref{eq:flow0} rewrites
$$0\in  \partial_{\dot P}  R(\bP, \bN , \dot P)+
\partial_{\bP}W(\bF,\bP) \sim \epsi^2 \partial_{\dot P}R_0(n,\dot
p)-\epsi n = \epsi \partial_{\dot p}R_0(n,\dot
p)-\epsi n.$$

In conclusion, the linearization the finite-strain constitutive model reads
\begin{align*}
  &\sigma = \mathbb C(\eta^s - p^s), \quad n = \sigma -\mathbb H
    p,\\
  &\partial_{\dot p}R_0(n,\dot p) \ni n, \quad R_0(n,\dot p)
    =\sup\{\dot p:\haz n \,| \, g(I,\haz n) \leq r(I,n)\}
\end{align*}
which corresponds to the linearized model in \cite{Ulloa2021}.
Note however that the above linearization argument is just
formal and should be paired with the proof that (suitably rescaled)
finite-strain solutions trajectories converge to linearized ones as $\epsi \to 0$,
in the spirit of~\cite{ms5}.\

\subsection{Dissipation}\label{sec:dissipation}
To simplify notation in the flow rule, from here on we handle the
thermodynamic force $N$
from \eqref{eq:N}
as a function of $F$ and $P$, namely,
$$N=N(F,P) = \bP^{-\TT}
F^\TT \partial_{\bFe} 
\We(F P^{-1}) \bP^{-\TT}-\partial_{\bP}\Wh(\bP).$$
Consequently we consider the
infinitesimal dissipation as a
  function of the variables $(P,F,\dot P)$ by letting    
  $$R(F,P,\dot P) = R(P,N(F,P),\dot P)$$
  without introducing new notation.
 Recall that $W_e$ and $W_p$ are assumed to be smooth, so that $N$ turns out to be a smooth
function of $F$ and $P$, as well.

Motivated by the von Mises example of Section
 \ref{sec:vonMises}, in particular by bounds \eqref{eq:h}, in the following we will assume that
 \begin{align}
&r_1\haz R(P,\dot P)
\leq R(F,P,\dot P) \leq \frac{1}{r_1}\haz R(P,\dot P)\quad \forall (F,P,\dot P)\in \Rz^{3\times 3}\times \SL \times \Rz^{3\times 3}\label{eq:Rnondegenerate}
 \end{align}
  for some given $r_1>0$, where $\haz R: \SL\times \Rz^{3\times 3} \to
  [0,\infty]$ is defined as
\begin{align}
    \haz R(P, \dot P): =\left\{
    \begin{array}{ll}
     |
      \dot P \bP^{-1}| & \text{if} \ \ \tr(  \dot P \bP^{-1})=0\\[1mm]
      \infty&\text{otherwise}.
    \end{array}
  \right. \nonumber
\end{align}
Moreover, we assume $R$ to be 
locally Lipschitz continuous on its domain and, in particular, lower
semicontinuous with respect to all variables. In the concrete von
Mises case of
\eqref{eq:locadiss2} this would directly follow  from the smoothness of 
\begin{align}\nonumber
    \bN\bP^{\TT}=\bP^{-\TT}
\bF^\TT \partial_{\bFe} 
\We(\bF \bP^{-1}) -\partial_{\bP}\Wh(\bP)\bP^{\TT} = F_e^\top \partial_{\bFe} 
\We(F_e) -\partial_{\bP}\Wh(\bP)\bP^{\TT} 
\end{align}
as a function of $F$ and $P$.

Building on the infinitesimal dissipation $R$, we introduce a
corresponding Finsler
metric on $\SL$ which measures the distance between two plastic strains. For all
given $F\in \Rz^{3\times 3}$ we define the local dissipation
$D(F,P_1,P_2)$ between $P_1\in \SL$ and $P_2\in \SL$ via
\begin{align}
    D( F, P_1, P_2):=\min&\bigg\{\int_0^1  R(F, P(s), \dot
                           P(s))\,\d s\: | \: P\in
  W^{1,1}(0,1;\SL), \nonumber\\
  &\qquad \qquad P(0)=P_1,\ P(1)=P_2\bigg\}.\label{eq:D}
\end{align}
Owing to the lower semicontinuity and the nondegeneracy
\eqref{eq:Rnondegenerate} of $R$ it is a standard matter to check that
the minimization in the definition of $ D( F, P_1, P_2)$ admits a solution. Moreover, the bounds
\eqref{eq:Rnondegenerate} entail that
\begin{equation}
  r_1 \haz D(P_1,P_2) \leq D(F,P_1,P_2)\leq \frac{1}{r_1}\haz
D(P_1,P_2) \quad \forall (F,P,\dot P)\in \Rz^{3\times 3}\times \SL
\times \Rz^{3\times 3} \label{eq:boundsD}
\end{equation}
where $\haz D(P_1,P_2)$ is  defined by the analogous minimization
problem \eqref{eq:D}, based on the infinitesimal dissipation $\haz
R(P,\dot P)$. The dissipation $\haz D$ is well studied
\cite{Mielke2002,Mielke2003}. In particular, we have that
\begin{align}
  &\haz D(P_1,P_2) =0 \ \Leftrightarrow \ P_1=P_2,\label{eq:nonodeg}\\
  & \haz D(P_1,P_2) \leq c( 1 + |P_1| + |P_2|)\quad \forall P_1, \, P_2
    \in \SL,\label{eq:boundd}\\
    &(P_1,P_2) \mapsto \haz D(P_1,P_2)  \quad \text{is continuous}. \label{eq:boundd2}
\end{align}
These properties directly translate  to the
analogous properties for $D(F,\cdot)$, uniformly with respect to $F \in
\Rz^{3\times 3}$. Moreover, the very definition \eqref{eq:D} and the positive $1$-homogeneity of $R$
with respect to $\dot P$ yield the triangle inequality
\begin{equation}
  \label{eq:tria}
  D(F,P_1,P_3) \leq D(F,P_1,P_2) + D(F,P_2,P_3) \quad \forall F \in
  \Rz^{3\times 3}, \ P_1,\, P_2, \, P_3 \in \SL.
\end{equation}
Indeed, let $P_{12},\, P_{23}: [0,1] \to \SL$ be  optimal curves for \eqref{eq:D}
connecting $P_1$ to $P_2$ and $P_2$ to $P_3$, respectively. The curve
given by
$P_{13}(t) = P_{12}(2t)$ for $t \in [0,1/2)$ and $P_{13}(t) =
P_{23}(2t-1) $ for $t\in [1/2,1]$ is such that
\begin{align*}
  &\int_0^1 R(F,P_{13}(t), \dot P_{13}(t))\, \d t \\
  &\quad=  \int_0^{1/2}
  R(F,P_{12}(2t), 2\dot P_{12}(2t))\, \d t  +  \int_{1/2}^1
    R(F,P_{23}(2t-1), 2\dot P_{23}(2t-1))\, \d t  \\
  &\quad =  D(F,P_1,P_2) + D(F,P_2,P_3).
\end{align*}
Hence, the triangle inequality \eqref{eq:tria} follows by taking the minimum on the left-hand
side with respect to all curves connecting $P_1$ and $P_3$.

Still with reference to example \eqref{eq:locadiss2}, by assuming
$r$ to be Lipschitz continuous with respect to $F$, uniformly in $P$ we
have that
\begin{align}
  &|R(F_1,P,\dot P) - R(F_2,P,\dot P) | \leq c |F_1 - F_2|\haz
    R(P,\dot P)\nonumber\\
  &\qquad \forall F_1,\, F_2 \in \Rz^{3\times 3}, \ P \in \SL, \, \dot P
    \in \Rz^{3\times 3},\label{eq:boundd2}
\end{align}
so that the analogous bound extends to $D$ and $\haz D$.

\section{Quasistatic-evolution system and regularization}\label{sec:newq}

In this section, we combine the constitutive relations
  \eqref{eqn:piola}--\eqref{eq:N} and the flow rule \eqref{combine1}
  with the quasistatic equilibrium system. Moreover, we comment on
  some simplification and regularization, which are needed for the
  analysis. 

  Let $\Omega\subset \mathbb{R}^3$ denote the reference configuration of the
elastoplastic body, which is assumed to be nonempty, open,
connected, bounded, and with Lipschitz boundary $\partial
\Omega$.

At all times $t \in [0,T]$, the deformation of the body is indicated
by $y(\cdot,t):\Omega\to\mathbb{R}^3$. We assume that the body is
clamped at $\Gamma_D\subset \partial\Omega$
  which is taken to be open in the topology of $\partial\Omega$ with
  $\mathcal{H}^2(\Gamma_D)>0$. The elastoplastic body is also
subjected to  a time-dependent
  body force with density $b:\Omega \times (0,T) \to \Rz^3$. In addition, a time-dependent
  traction $\gamma:\Gamma_N \times (0,T) \to \Rz^3$ is exerted at
  $\Gamma_N\subset \partial\Omega$, which is assumed to be open in the topology of
  $\partial\Omega$ and such that $\Gamma_D \cap \Gamma_N  = \emptyset$ and
  $\overline{\Gamma_D} \cup \overline{\Gamma_N} = \partial \Omega$.
  
  The quasistatic evolution system reads
  \begin{align}
    -{\rm div} \big(\partial_{F_e} W_e (\nabla y P^{-1}) P^{-\top}
    \big) = b\quad &\text{in} \ \ \Omega \times (0,T),\label{eq:1}\\
    \big(\partial_{F_e} W_e (\nabla y P^{-1}) P^{-\top}
    \big)n  = \gamma  \quad&\text{on} \ \  \Gamma_N \times (0,T), \label{eq:2}\\
    y={\rm id} \quad &\text{on} \ \  \Gamma_D \times (0,T),
                       \label{eq:3}\\
    (y(\cdot,0),P(\cdot,0)) = (y_0,P_0) \quad &\text{on} \ \  \Omega, \label{eq:4}\\ 
    \partial_{\dot P} R(\nabla y,P,\dot P) + \partial_P W(\nabla y,P)\ni 0 \quad &\text{in} \ \
                                                      \Omega \times
                                                      (0,T),\label{eq:5}
  \end{align}
  where in \eqref{eq:2} we indicate by $n$ the outward pointing normal
  to $\partial \Omega$ and $y_0:\Omega \to \Rz^3$ and $P_0:\Omega \to
  \SL$ in \eqref{eq:4} are given initial conditions.

Given the strong nonlinearities involved, proving the strong
solvability of problem \eqref{eq:1}--\eqref{eq:5} seems  
out of reach. Before proceeding to a variational reformulation of the
problem in
Section \ref{sec:energy}, we discuss here
some necessary simplification and regularization of the problem, which will eventually allow
for a mathematical treatment in Theorem \ref{thm:existencemv} below.

As a first regularization, we will include in the energy an additional term
penalizing space variations of the plastic strain. The {\it total
  stored energy} of the elastoplastic body will then read 
 $$\int_\Omega \We( \nabla y
\, \bP^{-1}) + \Wh(\bP)\, \d x + \frac{\mu}{ q_ r }\int_\Omega |\nabla \bP |^{ q_ r }
\, \d x,$$
where $ q_ r >3$ and $\mu>0$ are given.  In particular, $\mu$ is an additional length scale,
modulating the relative effect of the gradient
regularization. The plastic strain $P$ is hence assumed to be
in $W^{1, q_ r }(\Omega;\SL)$, which in particular entails that it is
continuous. 

In the elastoplastic context, penalizations of the gradient of $P$ in the
energy are classical and go under the name of {\it
  gradient-plasticity} theories  \cite{Fleck,Fleck2,Aifantis}. This
gradient term models nonlocal effects caused by short-range
interactions among dislocations \cite{Dillon,Gurtin,Gurtin2}. Apart
from the few exceptions \cite{compos2,compos}, all existence results
for finite-strain evolutive
elastoplasticity rely on gradient theories, see
\cite{Davoli,cplas2,Grandi2,Mielke16,Rindler2,Roeger} among others. In particular, the
incremental result of \cite{Mielke2005} features a penalization of the gradient
term ${\rm curl}\, P$ whereas the 
reference quasistatic evolution result \cite{Mainik} the full $H^1$-norm
of $P$ is penalized instead.

Note that the occurrence
of such a gradient term does not affect the dissipativity nature of the
model. The computation leading to the local dissipation inequality
\eqref{Clausius1} is still valid, up to considering the total stored
energy above and assuming   Neumann boundary conditions on $P$. More precisely, by assuming smoothness one has that
\begin{align}
  &\frac{\d}{\d t} \int_\Omega \left(W(\bF,\bP)
    +\frac{\mu}{ q_ r }|\nabla P|^{ q_ r }-  \bPi :
    \dot F \right)\, \d x\\
  &\quad= - \int_\Omega N : \dot P \, \d x- \mu \int_\Omega |\nabla
    P|^{ q_ r -2} \nabla P \vdots \nabla \dot P \, \d x   = -\int_\Omega R(F,\bP,\dot P) \, \d x \leq 0
\label{Clausius2}
\end{align}
where we have used the corresponding extended flow rule
$$\partial_{\dot P} R(\nabla y,P,\dot P) + N - \mu\, {\rm div}  \left(
|\nabla P|^{ q_ r -2}\nabla P \right) \ni 0 \quad \text{in} \ \
\Omega \times   (0,T),$$
see \eqref{eq:5}.
Note nonetheless that  \UUU \eqref{Clausius2} \EEE will be ascertained
below \UUU in a
weaker form, \EEE due to the limited regularity of the solution. By
including the gradient term in the total energy, the variable $P$
gains compactness, which helps in the analysis. 

In the following, we will assume the elastic energy density $W_e$ to
be defined on the whole $\Rz^{3\times 3}$ and to be of polynomial
growth. This simplification seems unavoidable  in order to be able
to mathematically tackle 
the problem. On the other hand, it is restrictive from
mechanical viewpoint, as we will not be able to guarantee that  
$\det F_e>0$ along the evolution. Let us however mention that such a
polynomial growth assumption is well motivated if $F_e$
stays in a small neighborhood of the identity. Differently from the
elastic case, this is not
unexpected  in the frame of elastoplasticity, where large deformations
generate large plastic strains, as opposed to large elastic
strains. As such, we believe this simplification to be acceptable in
the present setting  also from the mechanical viewpoint.

The assumed local Lipschitz continuity of the infinitesimal
dissipation $R$ from Section \ref{sec:dissipation} is unfortunately not enough for proving an existence
result. Besides the smoothness of the state dependence of $R$ on the
state $(F,P)$, one needs that the problem guarantees strong
compactness in the variables $(F,P)$. Such strong compactness is available for the variable $P$, as
effect of the above-introduced gradient term in the energy. On the
other hand, $F$ turns out to be just weakly compact on each
energy sublevel. In order to overcome this obstruction, we modify the model by
letting $F$ influence $R$ via a nonlocal, space-time convolution
term. Namely, we redefine 
$$ R = R(K F,P, \dot P)$$
where the nonlocal operator $K$ is given by
$$KF(x,t) = \int_0^t \int_{\Rz^3} \kappa(t-s) \phi(x-z) F(z,s) \, \d x
\, \d s \quad \forall (x,t) \in \Omega \times (0,T).$$
Here, $\kappa:\Rz\to [0,\infty)$ and $\phi: \Rz^3 \to [0,\infty)$ are  
smooth and $F$ is assumed to be trivially extended out of $\Omega$,
without introducing new notation, see \eqref{eq:KK} below.

Note that the above-mentioned compactness obstruction is present in the
linearized nonassociative setting of \cite{Mora}, as well, and a
space-time convolution of the stress is considered there,
too 
\cite{Mora2}. Besides nonassociative plasticity, analogous
regularizations via
mollification have been instrumental in the analysis of different
elastoplastic models, including Cam-Clay plasticity
\cite{DalMaso1,DalMaso2,DalMaso3} and Armstrong-Frederick nonlinear
kinematic hardening \cite{Francfort}. To date, we are unaware of
existence results for nonassociative plasticity without
mollifications, even in the linearised setting.   

From the modeling viewpoint, the above nonlocal modification
guarantees that the thermodynamic force $N$ driving the plasticization is
continuous. The occurrence of the term $KF$ in $R$
would correspond to a nonlocal dependence of the yield function $f$
and the plastic potential $g$ on $F$. Note that the convolution $K$ is {\it
  causal}, as the values of $KF(t)$ dependent on $F(s)$ for $s\in
(0,t)$ only. In addition, we do not prescribe the
size of the supports of $\kappa$ and $\phi$, so that in practice the effect of the
nonlocal term can be arbitrarily localized.

Before closing this section, let us mention that another possibility to
guarantee the strong compactness of $F$ in space and time would be that of including
additional gradient terms $\nabla F$ in the energy. This would
however require prescribing boundary conditions for $F$, an option
which might limit applications. In addition,
one should still consider viscous effects in time. Here, we prefer to resort to
the above regularization approach via mollification  which we
believe  \UUU to be  conceptually closer \EEE  to the
original model, as well as more transparent in its aims.

    
\section{Variational formulation and existence}\label{sec:energy}

In this section, we present the variational formulation of the
quasistatic evolution problem, introduce the concept of measure-valued solution, and
state our main existence result, namely, Theorem \ref{thm:existencemv}.

\subsection{Assumptions}\label{sec:assumptions} Let us start by introducing the functional
setting and specifying our assumptions. These will be tacitly assumed throughout
the paper.

Recall that the reference configuration
$\Omega\subset\mathbb{R}^3$ is nonempty, open, connected, bounded, and
Lipschitz and
that $\Gamma_D,\, \Gamma_N\subset \partial\Omega$ are open in the topology of
$\partial\Omega$ with $\mathcal{H}^2(\Gamma_D)>0$, $\Gamma_D \cap
\Gamma_N=\emptyset$, and $\overline{\Gamma_D} \cup \overline{\Gamma_N}
=\partial \Omega$.

Let $ q_ r >3$ and
$q,\,q_e,\,q_p>1$ fulfill
\begin{align}
    \frac{1}{q_e}+\frac{1}{q_p}\leq \frac{1}{q}<\frac{1}{3}. \label{esp1}
\end{align} The space of admissible states  is given as
 $\mathcal{Q}:=\mathcal{Y}\times\mathcal{P}$ where
\begin{align*}
  \mathcal{Y}&:=
               \{y\in W^{1, q }(\Omega;\mathbb{R}^3):\,y={\rm id}\,\,\,\text{on}\,\,\,\Gamma_D\},\\
    \mathcal{P}&:=W^{1, q_ r }(\Omega;\SL).
\end{align*}
We endow the space $\mathcal{Q}$ with the weak topology of the product
$W^{1,q}(\Omega;\mathbb{R}^3)\times
W^{1, q_ r }(\Omega;\mathbb{R}^{3\times 3})$.

Following the discussion of Sections \ref{sec:const}-\ref{sec:newq}, we assume the energy densities $W_e:\Rz^{3\times 3} \to
[0,\infty)$ and  $W_p:\mathbb{R}^{3\times 3} \to [0,\infty)$ to be smooth. Additional
dependencies on the material point $x\in \Omega$ are currently
excluded, but could be considered
with no particular intricacy. We ask
for constants  $c_1,\,c_2>0$ such that
\begin{align}\label{eq:coercivity1}
   -\frac{1}{c_1}+c_1|F_e|^{q_e}&\leq W_{e}(F_e) \leq      
                                  \frac{1}{c_1}(1 +|F_e|^{q_e})  \quad
                                                     \forall F_e\in
                                                     \Rz^{3\times 3},\\\label{eq:coercivity2}
   -\frac{1}{c_2}+c_2|P|^{q_p}&\UUU \leq W_{p}(P)\EEE \,\,\,\,\,
                                 \forall P \in \mathbb{R}^{3\times 3}.
\end{align}
Moreover, we  assume that the  {\it recession functions}
\begin{equation}\label{eq:rec}
  W^\infty_e(F_e): = \lim_{s\to \infty}\frac{W_e(sF_e)}{s^{q_e}} 
  \quad \text{and} \quad
  W^\infty_p(P): = \lim_{s\to \infty}\frac{W_p(sP)}{s^{q_p}}
\end{equation}
 are well-defined for all  $F_e,\,P\in \Rz^{3\times 3}$ 
 and are  continuous.

We assume that the elastic energy density $W_e$ is {\it polyconvex},
namely that there exists a convex and lower semicontinuous function $\mathbb{W}:\mathbb{R}^{3\times 3}\times \mathbb{R}^{3\times 3}\times \mathbb{R}_+\to [0,\infty)$ such that
\begin{align}\label{eq:polyconvexity}
    W_{e}(F_e)=\mathbb{W}(F_e,{\rm cof} F_e,{\rm det} F_e)\quad
  \forall F_e\in  \Rz^{3\times 3}.
\end{align}
Note that the assumptions on $W_e$ are compatible with {\it frame
  indifference}, namely,   $W_e(Q
F_e)=W_e(F_e)$ for all $F_e\in \Rz^{3\times 3}$ and $Q\in \SO$.

According to  Section \ref{sec:newq}, the  {\it stored energy} $\mathcal{W}:\,\mathcal{Q}\to [0,\infty]$ is
defined as the integral of the sum of the local and nonlocal energy
densities, namely,
\begin{align*}
  \mathcal{W}(y,P)&:=\int_\Omega W(\nabla y P^{-1},P,\nabla P)\,\d x\\
  &:=
  \int_\Omega W_{e}(\nabla y P^{-1})\,\d x+\int_\Omega W_p(P)\,\d x+\frac{\mu}{ q_ r }\int_\Omega|\nabla P|^{ q_ r }\,\d x,
\end{align*}
where $\mu>0$ is given.

Given the body-force and the surface-traction densities
$b$ and $\gamma$, the {\it generalized work of external forces} is defined as
\begin{align*}
    \la l(t),y\ra:=\int_\Omega
  b(x,t)\cdot y(x)\,\d x+\int_{\UUU \Gamma_{\text{N}}}\gamma(x,t)\cdot y(x)\,\d\mathcal{H}^2(x)
\end{align*}
where $\la\cdot,\cdot\,\ra$ indicates the duality pairing between
$\mathcal{Y}^\ast$ and $\mathcal{Y}$. We assume that 
\begin{equation}
  l\in C^{1}([0,T];\mathcal{Y}^\ast).\label{eq:elle}
\end{equation}
The latter immediately follows if $b \in
C^{1}([0,T];L^1(\Omega;\Rz^3))$ and $\gamma\in  
C^{1}([0,T];L^1(\Gamma_N;\Rz^3))$.

  The {\it
  total energy} $\mathcal{E}:[0,T]\times \mathcal{Q} \to
\Rz$ of the system  is hence given by
\begin{align*}
    \mathcal{E}(t,y,P):=\mathcal{W}(y,P)- \la l(t),y\ra.
\end{align*}

Following the discussion of Section \ref{sec:dissipation},
specifically relations \eqref{eq:boundsD}--\eqref{eq:boundd2}, we
assume the {\it local dissipation} $D=D(F,P_1,P_2): \Rz^{3\times
  3} \times \SL \times \SL \to [0,\infty)$ to be continuous and such that there
exist $\haz D:\SL\times \SL \to [0,\infty]$ and $c_3>0$ with
\begin{align}
  &   D( F,P_1,P_2)=D(F,P_2,P_1) \quad
    \forall  F \in \Rz^{3\times 3}, \ P_1,\,P_2 \in \SL, \label{eq:D0}\\
  &   D( F,P_1,P_2)=0 \ \Leftrightarrow
    \ P_1=P_2 \quad \forall F \in \Rz^{3\times 3},\label{eq:D1}\\
  & \haz D(P_1,P_2)\leq c_3D( F,P_1,P_2)\quad
    \forall F \in \Rz^{3\times 3}, \ P_1,\,P_2 \in \SL, \label{eq:D11}\\
  & D( F,P_1,P_3) \leq D( F,P_1,P_2)
    +D( F,P_1,P_2) \quad \forall F \in \Rz^{3\times 3}, \  P_1,\,
    P_2,\, P_3 \in \SL, \label{eq:D2}\\
   & D( F, P_1, P_2)\leq c_3 (1+|P_1|+|P_2|)\quad \forall F \in \Rz^{3\times 3}, \
    P_1,\, P_2, \in \SL, \label{eq:D3}\\
  &|D(F_1,P_1,P_2) - D(F_2, P_1,P_2) |\leq c_3 |F_1 - F_2| \, \haz D(P_1, P_2) \nonumber\\
  &\quad \forall F_1,\, F_2 \in \Rz^{3\times 3}, \ P_1,\, P_2, \in \SL.\label{eq:D4}
\end{align} 

For all given $\haz F\in
L^\infty(\Omega;\Rz^{3\times 3})$, by taking the integral  over $\Omega$ we define the {\it
  dissipation} $\mathcal{D} (\haz F,P_1,P_2)$ between   $P_1\in
L^\infty(\Omega;\SL)$ and $P_2\in  L^\infty(\Omega;\SL)$ as
\begin{align*}
    \mathcal{D}( \haz F, P_1, P_2):=\int_\Omega D( \haz F(x),P_1(x), P_2(x))\,\d x.
\end{align*}

Eventually, for all trajectories $ (\haz F,P) :[0,T]\to
L^\infty(\Omega;\mathbb{R}^{3\times 3}\times \SL)$  we define the {\it
  total
dissipation} on the time interval $[s,t] \subset [0,T]$  as
\begin{align*}
    {\rm Diss}_{[s,t]}( \haz F, P):=\sup \left\{\sum_{i=1}^m \mathcal{D}\big(\haz F(t_{i-1}), P(t_{i-1}), P(t_{i})\big):s{=}t_0{<}t_1{<}...{<}t_m{=}t \right\}
\end{align*}
where the supremum is taken over all partitions of $[s,t]$.

Given the kernels $\kappa\in W^{1,1}(0,T)$ and  $\phi\in
W^{1,\infty}(\mathbb{R}^3)$, we define 
  the space-time convolution operator
$K:L^\infty(0,T;L^q(\Omega;\Rz^{3\times 3})) \to L^\infty(\Omega
\times (0,T);\Rz^{3\times 3})$ as
\begin{align}
K  F(x,t):=(\kappa\ast(\phi\star  F))(x,t) \label{eq:KK}
\end{align}
where the symbols
$\ast$ and $\star$ respectively denote the standard convolution
products on $(0,t)$ and $\mathbb{R}^3$. Namely, for all $(x,t) \in
\Omega \times (0,T)$ we let
\begin{align*}
    (\kappa\ast  F)(x,t):=\int_0^t\kappa(t-s)  F (x,s)\,\d s \ \ \text{and} \ \
    (\phi\star  F)(x,t):=\int_{\mathbb{R}^3}\phi(x-z)  F (z,t)\,\d z.
\end{align*}
In the latter,  the trivial extension of $ F (\cdot,t)$ to the whole $\Rz^3$ is
considered, without introducing new notation.

 Note that
we define $\mathcal{D}$ for $\haz F: [s,t]\to
L^\infty(\Omega;\mathbb{R}^{3\times 3})$ only, as $\haz F$ is actually a
place holder for $K \nabla y$. In fact, owing to the coercivity from
Lemma \ref{lemma:comp} below, we will have that  
$\nabla y \in L^\infty(0,T;L^q(\Omega;\Rz^{3\times 3})) $, so that   
$K\nabla y$ is bounded almost everywhere in $\Omega \times (0,T)$.

Eventually, we ask to be given the initial state $P_0\in \mathcal{P}$
such that there exists a $y_0 \in \mathcal{Y}$ such that  
   \begin{align}
     \mathcal{E}(0,y_0,P_0)\leq\mathcal{E}(0,\hat{y},\hat{P})+
     \mathcal{D}\big( 0,P_{0},\hat P\big),\,\,\,\text{for all}\,\,(\hat y,\hat P)\in\mathcal{Q}.\label{eq:stab0}
   \end{align}
 Namely, we assume that the initial state is {\it stable}, see
 \eqref{def:stability} below.
 
\subsection{\UUU Generalized Young measures} \UUU
 We
start by introducing \EEE some notation and recalling some
basic fact,  referring, for instance, to \cite{Fonseca,Bogdan2020,Kristensen} for additional
detail.

Let $h : \mathbb{R}^m\to\mathbb{R}$ be a continuous function such
that the corresponding  {\it $p$-recession function} $ h^\infty
: \mathbb{S}^{m-1}\to\mathbb{R}$ given by
\begin{align*}
    h^\infty(\tilde z):=\lim_{s\to\infty}\frac{h(s\tilde
  z)}{s^p} 
\end{align*}
is well-defined and continuous. 
\UUU Henceforth, the recession function $h^\infty$ is considered to be
extended to a $p$-homogeneous function from $\mathbb{R}^m$ to
$\mathbb{R}$ without changing notation. \EEE

Given $E\subset \Rz^m$ measurable, we indicate by $\mathcal{M}(E)$ and
$\mathcal{M}^+(E)$ the Radon measures and the positive
Radon measures on $E$, respectively. Given
a weakly convergent sequences
$(v_n)\subset L^p(\Omega,\mathbb{R}^m)$ one can find
\cite{Alibert,DiPerna} a not relabeled
subsequence and a triplet of measures $(\nu_x,\lambda,\nu^\infty_x)$
such that  
\begin{align}
  &\int_{\Omega}h(v_n(x))\varphi(x)\,\d x\nonumber\\
  &\quad \to\int_{\Omega}\int_{\mathbb{R}^m}h(z)\,\d{\nu}_x(z)\,\varphi(x)\,\d x
		+ \int_{\overline{\Omega}}\int_{\mathbb{S}^{m-1}}h^{\infty}(\tilde
  z)\:\,\d{\nu}_x^{\infty}(\tilde z) \,\varphi(x) \,\d{\lambda}(x), \label{eq:Young}
\end{align}
for all $\varphi\in C(\overline{\Omega})$ and for any such functions
$h$. Here, the  {\it oscillation measure} $\nu=(\nu_x)_{x\in \Omega }\in
   L^\infty_{w^\ast}(\Omega;\mathcal{M}(\mathbb{R}^m))$ is a
   parametrized Radon measure depending  weakly$^*$ measurably on the
parameter $x\in \Omega$ with respect to the Lebesgue measure, $\lambda \in
   \mathcal{M}^+(\overline{\Omega})$ is the nonnegative {\it concentration
       measure}, and $\nu^\infty=(\nu_x^\infty)_{x\in\overline \Omega}\in
 L^\infty_{w^\ast}(\overline \Omega
 ,\lambda;\mathcal{M}(\mathbb{S}^{m-1}))$ is the parametrized probability {\it concentration-angle
         measure} depending  weakly$^*$ measurably on the
   parameter $x\in \overline \Omega$ with respect to the measure
   $\lambda$. Note that  the measure $\nu^\infty_x$ is defined $\lambda$-almost
everywhere.
 In case \eqref{eq:Young} holds we say that   the {\it Young measure}
$\nu:=(\nu_x,\lambda,\nu_x^\infty)$ is {\it generated} by the weakly
convergent sequence $(v_{n})$.   

 In  the specific setting of our model, the above-recalled
classical setting needs to be slightly extended in order to deal with different homogeneities
across components. Precisely, we deal with Young measures 
generated by sequences $(F_n,P_n,G_n)$ which are weakly converging  in
$$ L^{q_e}(\Omega;\Rz^3)\times L^{q_p}(\Omega; \Rz^{3\times 3})\times
L^{ q_ r }(\Omega;\Rz^{3\times 3 \times 3})$$ and we are interested in the
limiting behavior of the
sequence $W (F_n,P_n,G_n)$ under assumptions \eqref{eq:coercivity1}--\eqref{eq:rec}. This can be dealt with by introducing the  {\it nonhomogeneous unit sphere}
\begin{align*}
    \mathbb{S}_{q_e,q_p, q_ r }:=\left\{\tilde z:= (\tilde z_e,\tilde
  z_p,\tilde z_r)  \in\mathbb{R}^{3\times 3}\times \mathbb{R}^{3\times 3}\times
  \mathbb{R}^{3 \times 3 \times 3}\: | \ |  \tilde z_e
  |^{q_e}+|  \tilde z_p |^{q_p}+|  \tilde z_r |^{ q_ r }=1\right\}.
\end{align*}
 The classical construction \cite{Alibert,DiPerna} can hence be
adapted to ensure that for any such
weakly convergent sequence one  may find  a not relabeled
subsequence $(F_n,P_n,G_n)$ and  a Young measure
$\nu=(\nu_x,\lambda,\nu^\infty_x)$ with  $\nu\in
L^\infty_{w^\ast}(\Omega;\mathcal{M}(\mathbb{R}^{m}))$,
$\lambda\in\mathcal{M}^+(\overline{\Omega})$,  and $\nu^\infty\in
L^\infty_{w^\ast}(\Omega,\lambda;\mathcal{M}(\mathbb{S}_{q_e,q_p,
  q_ r }))$  with $m=3 + (3\times 3) + (3\times 3\times 3)$ such that
\begin{align*}
     W(F_n(x),P_n(x),G_n(x))\:\d x\overset{\ast}{\rightharpoonup}\int_{\mathbb{R}^{m}}W(z)\,\d{\nu}_x(z)\,\d x
		+ \int_{\mathbb{S}_{q_e,q_p, q_ r }}W^{\infty}(\tilde z)\,\d{\nu}_x^{\infty}(\tilde z) \,\d{\lambda},
\end{align*}
weakly$^\ast$ in the sense of measures,  see \eqref{eq:Young}. Here, the {\it nonhomogeneous
recession function} $ W^\infty: \mathbb{S}_{q_e,q_p, q_ r } \to \Rz$ is
defined as
\begin{align*}
    W^\infty(\tilde z):=\lim_{s\to\infty}
		\frac{W(s^{q_p  q_ r } \tilde z_e 
  ,s^{q_e  q_ r } \tilde z_p  ,s^{q_e q_p} \tilde
  z_r  )}{s^{q_eq_p q_ r }} =  W^\infty_e(\tilde z_e )
   +
  W^\infty_p( \tilde z_p  ) + \frac{\mu}{ q_ r }|
  \tilde z_r  |^{ q_ r }
\end{align*}
where the last equality comes from \eqref{eq:rec}.

 A further extension is then required by the fact that we deal with the time-dependent,
quasistatic setting. The generating sequences $(F_n,P_n,G_n)$ \UUU additionally depend
on time and  weakly$^\ast$ converge \EEE in 
$$ L^\infty(0,T;L^{q_e}(\Omega;\Rz^3)\times L^\infty(0,T;L^{q_p}(\Omega;\Rz^{3\times 3})\times
L^\infty(0,T;L^{ q_ r }(\Omega;\Rz^{3\times 3 \times 3})).$$ 
Correspondingly, the
generated Young measure $(\nu_{x,t},\lambda,\nu^\infty_{x,t})$ is additionally parametrized by time. We will use the following structural result.  

\begin{Lemma}[Structural  result,
  \cite{Brenier2011}]\label{lemma: spartYM}
  Let the sequence  $(F_n,P_n,G_n)_n$ be bounded in $$L^\infty(0,T;L^{q_e}(\Omega;\mathbb{R}^{3\times 3})\times
L^{q_p}(\Omega;\mathbb{R}^{3\times 3})\times
L^{ q_ r }(\Omega;\mathbb{R}^{3\times 3\times 3}))$$ and
generate the Young measure 
$\nu=\left(\nu_{x,t},\lambda,\nu_{x,t}^\infty\right)$ with  $\nu_{x,t}\in
L^\infty_{w\ast}(\Omega \times (0,T);\mathcal M(\Rz^m))$, $\lambda \in
 \mathcal M^+(\overline \Omega \times [0,T])$, and $\nu_{x,t}^\infty\in
L^\infty_{w\ast}(\Omega \times (0,T);\mathcal M(\mathbb
S_{q_e,q_p,q_r}))$. Then, 
\begin{align*}
    \sup_t\left(\int_\Omega\la\nu_{x,t},| z_e |^{q_e}+| z_p |^{q_p}+| z_r |^{ q_ r }\ra \,\d x\right)<\infty,
\end{align*}
 and  the concentration measure $\lambda$ admits a disintegration of the form $\lambda(\d x,\d t)=\eta_t(\d x)\otimes \d t$, where $t\mapsto \eta_t$ is a bounded measurable map from $[0,T]$ into $\mathcal{M}^+(\overline{\Omega})$.
\end{Lemma}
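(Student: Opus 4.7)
The plan is to exploit the uniform $L^\infty$-in-time bound of $(F_n,P_n,G_n)$ to rule out concentration in the time variable and to control the oscillation moments uniformly. I will test the Young-measure representation formula against tensorized test functions $\varphi(x)\psi(t)$ applied to the growth function $W(z_e,z_p,z_r):=|z_e|^{q_e}+|z_p|^{q_p}+|z_r|^{q_r}$, whose nonhomogeneous recession, obtained by direct substitution in the defining formula, reduces to $|\tilde z_e|^{q_e}+|\tilde z_p|^{q_p}+|\tilde z_r|^{q_r}$, which is identically equal to $1$ on $\mathbb{S}_{q_e,q_p,q_r}$.

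The first step is to pass to the limit. For every nonnegative $\psi\in C([0,T])$, the $L^\infty(0,T;L^{q_e}\times L^{q_p}\times L^{q_r})$ hypothesis gives
\begin{align*}
\int_0^T \psi(t)\int_\Omega \bigl(|F_n|^{q_e}+|P_n|^{q_p}+|G_n|^{q_r}\bigr) \,\d x\,\d t \leq C\,\|\psi\|_{L^1(0,T)}
\end{align*}
uniformly in $n$. The Young-measure representation of the limit, combined with $W^\infty\equiv 1$ and the fact that each $\nu^\infty_{x,t}$ is a probability measure on $\mathbb{S}_{q_e,q_p,q_r}$, yields in the limit
\begin{align*}
\int_0^T \psi(t)\int_\Omega \la\nu_{x,t},W\ra\,\d x\,\d t \;+\; \int_{\overline{\Omega}\times [0,T]} \psi(t)\,\d\lambda(x,t) \;\leq\; C\,\|\psi\|_{L^1(0,T)}.
\end{align*}
This single inequality is the source of both conclusions.

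For the oscillation bound, I would drop the (nonnegative) concentration term and approximate indicator functions of Borel sets $A\subset[0,T]$ by nonnegative continuous $\psi$, producing $\int_A \int_\Omega \la \nu_{x,t},W\ra\,\d x\,\d t\leq C|A|$. Applying the Lebesgue differentiation theorem to the resulting absolutely continuous function $t\mapsto \int_0^t\int_\Omega \la \nu_{x,s},W\ra\,\d x\,\d s$ delivers the pointwise essential bound $\int_\Omega \la \nu_{x,t},W\ra\,\d x \leq C$ for a.e.\ $t\in[0,T]$, which is exactly the claim of the first display.

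For the disintegration of $\lambda$, I would instead drop the nonnegative oscillation term in the master inequality and denote by $\pi_*\lambda$ the pushforward of $\lambda$ under the projection $(x,t)\mapsto t$. The resulting estimate $\int \psi\,\d(\pi_*\lambda)\leq C\,\|\psi\|_{L^1(0,T)}$ extends by monotone convergence to nonnegative Borel $\psi$, giving $\pi_*\lambda \leq C\,\mathcal{L}^1\mres[0,T]$; in particular $\pi_*\lambda$ is absolutely continuous on $[0,T]$ with bounded density. The classical disintegration theorem applied to $\lambda\in\mathcal{M}^+(\overline{\Omega}\times [0,T])$ then furnishes a weakly$^*$ measurable family of probability measures $\{\zeta_t\}_{t\in[0,T]}$ on $\overline{\Omega}$ such that $\lambda = \zeta_t \otimes \d(\pi_*\lambda)(t)$; absorbing the Radon-Nikodym derivative yields $\eta_t := \frac{\d(\pi_*\lambda)}{\d t}(t)\,\zeta_t$, which is the claimed bounded measurable map from $[0,T]$ into $\mathcal{M}^+(\overline{\Omega})$ with $\lambda(\d x,\d t)=\eta_t(\d x)\otimes \d t$. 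The main delicate point I foresee is tracking the a.e.\ definedness of the slices $\nu_{x,t}$ and $\nu^\infty_{x,t}$ (defined $\mathcal{L}^1\otimes\d t$-a.e.\ and $\lambda$-a.e.\ respectively); this is fully handled by the nonnegativity that allows the oscillation and concentration contributions to separate cleanly in the test-function calculation.
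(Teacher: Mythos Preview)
The paper does not actually prove this lemma; it merely cites \cite{Brenier2011} and \cite[Prop.~3.6]{WiedT2012} for the case $q_e=q_p=q_r=2$ and asserts that the general-exponent case is analogous. Your argument is precisely the standard one from those references, correctly adapted to the nonhomogeneous sphere: testing the representation formula against the growth function $W(z)=|z_e|^{q_e}+|z_p|^{q_p}+|z_r|^{q_r}$, whose nonhomogeneous recession is identically $1$ on $\mathbb{S}_{q_e,q_p,q_r}$, is exactly the trick that converts the uniform $L^\infty_t$ bound into absolute continuity of $\pi_*\lambda$ with respect to $\d t$, after which the disintegration theorem finishes the job. The oscillation-moment bound via Lebesgue differentiation is likewise the expected route. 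Your proof is correct and is the argument the paper is implicitly invoking.
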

 In case $q_e=q_p= q_ r=2 $ this fact is reported in
\cite{Brenier2011}. A detailed proof
can also be found in \cite[Prop.~3.6]{WiedT2012}, as well. The case of
general exponents can be proved analogously.

\subsection{Main result} \UUU We are now ready to introduce our notion
of solution \EEE based
on the concept of Young  measures.  We resort to a single
Young measure $\nu= (\nu_{x,t},\lambda,\nu_{x,t}^\infty)$ in order to
qualify the  weak
limit of sequences $$(\nabla y_nP_n^{-1},P_n, \nabla P_n)\in
L^\infty(0,T; L^{q_e}(\Omega;\Rz^{3\times  3  })\times L^{q_p}(\Omega;\SL)\times
L^{ q_ r }(\Omega;\Rz^{3\times 3\times 3})).$$ 
The action of the Young measure $\nu$ on the energy density $W$ is
denoted by 
\begin{align}
     \la\!\la\nu_t,W\ra\!\ra:= \int_\Omega \la\nu_{x,t},\tilde W(
  z_e , 
  z_r )\ra  + W_p(P(t))\,\d x+\int_{\overline{\Omega}} \la\nu_{x,t}^\infty,\tilde W^\infty(
  z_e , 
  z_r )\ra \,\d\eta_{t}(x),\label{eq:action}
\end{align}
where
 
\begin{align*}
    \tilde W(\nabla y P^{-1},\nabla P)=W_{e}(\nabla y P^{-1})+\frac{\mu}{ q_ r }|\nabla P|^{ q_ r },
\end{align*}
represent the {\it concentration part} of the energy $W$ and $\tilde
W^\infty( z_e, z_r )= W^\infty_e(z_e)  +\mu| z_r |^{ q_ r }/ q_ r $
is the corresponding recession function.

\begin{Definition}[Energetic measure-valued solution]\label{def:ensol}\sl
  An \emph{energetic
    measure-valued solution}  is a triple $(y,P,\nu)$ consisting
  of a function $y\in
  L^\infty(0,T;\mathcal{Y})$, a mapping $P:[0,T]\to
  \mathcal{P}$,  and a  Young measure
  $\nu:=\left(\nu_{x,t},\lambda,\nu_{x,t}^\infty\right)$ such that
\begin{align*}
        \la\nu_{x,t}, z_e \ra&= \nabla y P^{-1}    \quad\text{in\:} L^{\infty}(0,T;L^{q_e}(\Omega)) ,\\
    \la\nu_{x,t},
                     z_p  \ra &= P  \hspace{1.2cm}\text{ in\:} L^{\infty}(0,T;L^{q_p}(\Omega)) ,\\
        \la\nu_{x,t}, z_r \ra &= \nabla P \hspace{0.95cm}\text{in\:} L^{\infty}(0,T;L^{ q_ r }(\Omega)) ,
	\end{align*}
and for a.e. $t\in (0,T)$ the following 
conditions hold: 
\begin{align}
  &\text{\rm Stability:}\quad\nonumber\\[1mm]
  &\la\!\la\nu_{ t},W\ra\!\ra-\langle l(t),y(t)
    \rangle \nonumber\\
  &\qquad \leq
  \int_\Omega W(\nabla \hat y\hat {P}^{-1},\hat P,\nabla \hat
  P)\,\d x-\langle l(t),\hat y\rangle + {\mathcal D} ( K\nabla y(t),
    P(t), \hat P)   \quad 
    \forall (\hat y, \hat P)\in \mathcal{Q} ,\label{def:stability}\\[5mm]
  &\text{\rm Energy balance:} \nonumber\\[1mm]
  &\la\!\la\nu_t,W\ra\!\ra-\langle l(t),y(t)\rangle+{\rm
                                       Diss}_{[0,t]}(K\nabla y,P) \nonumber\\
  &\quad = \int_\Omega W(\nabla y_0  {P}^{-1}_0,P_0,\nabla P_0)\,\d
    x-\langle l(0),y_0\rangle -\int_0^t
    \la \dot l(r),y(r)\ra \,\d r .\label{def:energy_balance}
\end{align}
\end{Definition}

Our main result reads as follows.

\begin{Theorem}[Existence]\label{thm:existencemv}
  Under the assumptions of Section \emph{\ref{sec:assumptions}}, there exists a measure-valued energetic solution $(y,P,\nu)$ with $P(0)=P_0$.   
\end{Theorem}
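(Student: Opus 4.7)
The plan is to use a Rothe-type time-discretization scheme, combined with an extended Helly argument for the plastic variable and Young-measure convergence for the elastic variable. Fix a uniform partition $0=t_0^N<\cdots<t_N^N=T$ with step $\tau_N=T/N$. Inductively define $(y_k^N,P_k^N)$ as a minimizer of
$$(y,P)\mapsto \mathcal{E}(t_k^N,y,P)+\mathcal{D}\bigl(K\nabla \bar y^N(t_k^N),P_{k-1}^N,P\bigr),\qquad (y,P)\in\mathcal{Q},$$
where $\bar y^N$ is the piecewise-constant left-continuous interpolant of the previously computed values. Causality of $K$ ensures that $K\nabla \bar y^N(t_k^N)$ depends only on $y_0^N,\dots,y_{k-1}^N$, so each increment is well-posed. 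Existence of minimizers follows from the direct method: coercivity from \eqref{eq:coercivity1}--\eqref{eq:coercivity2} together with the gradient penalty in $\mathcal{W}$ (using $ q_ r >3$) confines minimizing sequences to a weakly compact subset of $\mathcal{Q}$; $\mathcal{W}$ is weakly lower semicontinuous by the polyconvexity assumption \eqref{eq:polyconvexity}; and the dissipation term is continuous with respect to the weak topology on $\mathcal{P}$ owing to \eqref{eq:D3} and the compact embedding $W^{1, q_ r }(\Omega)\hookrightarrow C(\overline\Omega)$.

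Next, by standard comparison with the competitor $(y_{k-1}^N,P_{k-1}^N)$ and telescoping, I would derive the discrete stability inequality and the discrete two-sided energy estimate in the spirit of \cite{Mielke2005,Mainik}. These deliver $k,N$-uniform bounds on $\mathcal{E}(t_k^N,y_k^N,P_k^N)$ and on the total dissipation, hence via coercivity on $\|\nabla y_k^N\|_{L^q}$, $\|P_k^N\|_{L^{q_p}}$, $\|\nabla P_k^N\|_{L^{ q_ r }}$. Since $W^{1, q_ r }(\Omega;\SL)$ embeds compactly into $C(\overline\Omega;\SL)$, the extended Helly Selection Principle of the Appendix applies to the piecewise-constant interpolant $P^N$, producing a limit map $P:[0,T]\to\mathcal{P}$ such that, up to subsequence, $P^N(t)\rightharpoonup P(t)$ in $W^{1, q_ r }$ (hence uniformly in $\overline\Omega$) for every $t\in[0,T]$. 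Simultaneously, $y^N\overset{\ast}{\rightharpoonup} y$ in $L^\infty(0,T;\mathcal{Y})$, and the family $(\nabla y^N(P^N)^{-1},P^N,\nabla P^N)$ generates, via the nonhomogeneous Young-measure construction, a measure $\nu=(\nu_{x,t},\lambda,\nu_{x,t}^\infty)$ with the structure required by Definition \ref{def:ensol}.

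The central analytic step is the passage to the limit in both the stability and the energy balance. Here the space-time mollification $K$ is crucial: the operator $K:L^\infty(0,T;L^q(\Omega;\Rz^{3\times 3}))\to C(\overline\Omega\times[0,T];\Rz^{3\times 3})$ is compact, since $\phi\star$ smooths in space (lifting $L^q$ into Lipschitz bounds) while $\kappa\ast$ smooths in time through its $W^{1,1}$-regularity. Consequently $K\nabla y^N\to K\nabla y$ uniformly on $\overline\Omega\times[0,T]$. Combined with \eqref{eq:D4}, the continuity of $\haz D$ along the $W^{1, q_ r }$-convergent trajectory $P^N(t)\to P(t)$, and \eqref{eq:D11}, this yields pointwise convergence $\mathcal{D}(K\nabla y^N(t),\cdot,\cdot)\to\mathcal{D}(K\nabla y(t),\cdot,\cdot)$ and, by a partition-refinement argument for the total variation, $\liminf_N\mathrm{Diss}_{[0,t]}(K\nabla y^N,P^N)\ge\mathrm{Diss}_{[0,t]}(K\nabla y,P)$. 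For the energy term, the decomposition \eqref{eq:action} via oscillation and concentration measures together with the usual lower-semicontinuity of Young-measure actions on polyconvex integrands allows a weak-$^\ast$ limit passage. Inserting an arbitrary competitor $(\hat y,\hat P)\in\mathcal{Q}$ into the discrete stability and sending $N\to\infty$ yields the continuous stability \eqref{def:stability}; a standard liminf–limsup matching then closes the energy balance \eqref{def:energy_balance}.

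The main obstacle I expect is the passage to the limit in the state-dependent total dissipation $\mathrm{Diss}_{[0,t]}(K\nabla y^N,P^N)$. Its joint dependence on the current deformation gradient (through $K\nabla y$) and on the plastic trajectory $P$ obstructs the direct application of the standard lower-semicontinuity results for $F$-independent dissipations. The remedy consists in exploiting (i) the uniform compactness of $K\nabla y^N$ granted by the mollification, (ii) the continuity in $t$ of the limit $P$ in the $\haz D$-metric inherited through \eqref{eq:D11} from Helly's principle, and (iii) the $F$-Lipschitz estimate \eqref{eq:D4}, to reduce matters to the classical $F$-independent case. A secondary subtlety is the construction of admissible test pairs in the limiting stability: no deterministic recovery sequence can in general match the concentration measure $\eta_t$, which is precisely why \eqref{def:stability} has to be formulated through the Young-measure action rather than through pointwise values.
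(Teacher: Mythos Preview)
Your overall strategy---time discretization, extended Helly for $P$, Young-measure compactness for the elastic variables, and exploitation of the mollification $K$---matches the paper's, and your identification of the state-dependent dissipation as the main obstacle is correct. However, there is a genuine gap in how you pass to the limit in the stability and the upper energy inequality.

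You write that ``inserting an arbitrary competitor into the discrete stability and sending $N\to\infty$ yields the continuous stability''. This does not work as stated. The Young measure $\nu=(\nu_{x,t},\lambda,\nu_{x,t}^\infty)$ is generated by the sequence $(\nabla \bar y_N \bar P_N^{-1},\bar P_N,\nabla\bar P_N)$ only in the \emph{space-time} sense: convergence holds against test functions on $\overline\Omega\times[0,T]$, not pointwise in $t$. From $\bar y_N\overset{\ast}{\rightharpoonup} y$ in $L^\infty(0,T;\mathcal{Y})$ alone you cannot conclude that $\int_\Omega W(\nabla\bar y_N(t)\bar P_N^{-1}(t),\ldots)\,\d x$ converges (or is lower semicontinuous) towards $\la\!\la\nu_t,W\ra\!\ra$ for a.e.\ fixed $t$. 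The paper's remedy is to integrate the discrete stability and the discrete upper energy estimate over a short time window $(t,t+\epsi)$, divide by $\epsi$, pass $N\to\infty$ using the space-time Young-measure representation together with the disintegration $\lambda=\eta_t\otimes\d t$ from Lemma~\ref{lemma: spartYM}, and only then send $\epsi\to 0$ via Lebesgue differentiation. This $\epsi$-averaging device is precisely what converts space-time weak convergence into the a.e.-in-time statements \eqref{def:stability}--\eqref{def:energy_balance}, and it is absent from your plan.

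A second, related gap: the ``standard liminf--limsup matching'' you invoke for the energy balance is not standard here. The lower energy estimate is obtained by testing the limiting stability at partition points $s_j^m$ with competitors built from $(y,P)$ at $s_j^m$ and summing. Two difficulties arise: (i) $y\in L^\infty(0,T;\mathcal{Y})$ is only defined a.e., so the partition must avoid a null set while still producing a Riemann-sum approximation of $\int_0^t\la\dot l,y\ra\,\d r$; the paper uses the extension of the Dal~Maso--Francfort--Toader lemma recorded in Lemma~\ref{lemma:DalMaso} for this; (ii) the dissipation increment appearing after testing is $\mathcal{D}(K\nabla y(s_{j-1}^m),P(s_{j-1}^m),P(s_j^m))$, with the ``wrong'' time argument in $K\nabla y$ for direct comparison with ${\rm Diss}_{[0,t]}$. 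The paper swaps $s_{j-1}^m$ for $s_j^m$ using the symmetry \eqref{eq:D0} and the $F$-Lipschitz bound \eqref{eq:D4}, at the cost of an error controlled by $\eta_m\,{\rm Diss}_{[0,t]}(K\nabla y,P)$ that vanishes as the partition is refined. Neither step is addressed in your proposal.

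Two smaller discrepancies worth noting. First, the paper works with a \emph{discrete-in-time} convolution $K_\tau$ at the incremental level and then controls $\overline{(K_{\tau_n}\nabla y)}_n-K\nabla y$ in $L^1(0,T;L^q)$ via \cite{Stefanelli2001}; your use of $K$ applied directly to the interpolant is a legitimate variant, but the convergence you claim for $K\nabla y^N$ is in $L^1(0,T;L^q)$ rather than uniform on $\overline\Omega\times[0,T]$ (recall $\kappa$ is only $W^{1,1}$). Second, the paper uses the extended Helly principle to obtain actual convergence of the discrete total dissipation to a nondecreasing $\delta$, not merely a $\liminf$ inequality; this is what feeds into the $\epsi$-averaging argument for the dissipation term.
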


Theorem \ref{thm:existencemv} is proved in Section \ref{sec:proof}
below. 

\subsection{A formulation in terms of functions}
Let us comment now on the possibility of formulating an alternative existence
statement purely in terms of functions. This is indeed possible, \UUU at
the expense of possibly considering \EEE two distinct limits for the discrete
deformations \UUU and obtaining an upper energy bound only. \EEE
In particular, we can prove the following.

\begin{Proposition}[Solutions as functions]
 \label{lemma:functions}
    Under the assumptions  of Section \emph{\ref{sec:assumptions}}
    there exists  an \emph{energetic function-valued solution}, namely a
    triple $(y,P,\tilde y)$ where $y\in
    L^\infty(0,T;\mathcal{Y})$ and  $(\tilde y,P): [0,T]\mapsto\mathcal{Q}$ with
    $(\tilde y(0), P(0))=(y_0,P_0)$ such that for a.e. $t\in (0,T)$
\begin{align}
  &\text{\rm Stability:}\nonumber\\
  &\int_\Omega W(\nabla \tilde y(t) P^{-1}(t), P(t),\nabla
  P(t))\,\d x-\langle l(t),\tilde y(t) \rangle \nonumber\\
  &\quad \leq \int_\Omega W(\nabla \hat y\hat {P}^{-1},\hat P,\nabla
    \hat P)\,\d x-\langle l(t),\hat y\rangle  + {\mathcal D}
    (K\nabla y(t),  P(t),\hat P) \quad  \forall (\hat y,\hat
    P)\in \mathcal{Q},  \label{eq:stab1}\\[2mm]
  &\text{\UUU \rm Upper energy estimate:\EEE }\nonumber\\[1mm]
  & \int_\Omega W(\nabla \tilde y(t) P^{-1}(t), P(t),\nabla
                                  P(t))\,\d x-\langle l(t),\tilde y(t)\rangle +{\rm
  Diss}_{[s,t]}\big(K\nabla y, P \big) \nonumber\\
  &\quad \UUU \leq \EEE \int_\Omega W(\nabla y_0  {P}^{-1}_0,P_0,\nabla P_0)\,\d
    x-\langle l(0),y_0\rangle -\int_0^t
    \la \dot l(r),y(r)\ra \,\d r.\label{eq:en1}
\end{align}
\UUU Moreover, $(y,P,\tilde y)$ are limits of discrete solutions $(\bar
y_n,\bar P_n)$ to \eqref{eq: dmin} as $\tau_n \to 0$. More precisely,
up to not relabeled subsequences
we have $\bar y_n\stackrel{\ast}{\rightharpoonup} y$ in $L^\infty(0,T;\mathcal{Y})$, $\bar
P_n(t)\to P(t)$ in $L^\infty(\Omega ; \Rz^{3\times 3})$ for all
$t\in [0,T]$, and $\ove y_{n^t_k}(t) \weak \tilde y(t)$ for all $t\in
[0,T]$ and for some subsequence $n^t_k\to \infty$ possibly depending on
$t$.

In case $y=\tilde y$ a.e., for a.e. $t \in (0,T)$ one has the 
\begin{align}
&\text{\rm  Energy balance: }\nonumber\\[1mm]
  & \int_\Omega W(\nabla \tilde y(t) P^{-1}(t), P(t),\nabla
                                  P(t))\,\d x-\langle l(t),\tilde y(t)\rangle +{\rm
  Diss}_{[s,t]}\big(K\nabla y, P \big) \nonumber\\
  &\quad \UUU =  \int_\Omega W(\nabla y_0  {P}^{-1}_0,P_0,\nabla P_0)\,\d
    x-\langle l(0),y_0\rangle -\int_0^t
    \la \dot l(r),\tilde y(r)\ra \,\d r.\label{eq:en100}
    \end{align}\EEE

\end{Proposition}

Proposition \ref{lemma:functions} is proved in Section
\ref{proof:lemma:functions}. The crucial point in the above statement
is that one is forced to use the two, possibly different functions
$\tilde y:[0,T]\to \mathcal{Y}$ and $y\in
L^\infty(0,T;\mathcal{Y})$. Indeed, $\tilde y$ and $y$ result from
limit procedures applied to some given piecewise constant
approximation $\UUU \bar y_n \EEE:[0,T]\to \mathcal{Y}$ with \UUU $\|
\bar y_n\|_{L^\infty(0,T;\mathcal{Y})}\leq c$, \EEE see Section
\ref{sec:discrete_solutions}. Owing to the latter bound, by passing to
some not relabeled subsequence one defines $y$ as the weak$^\ast$ limit
in $L^\infty(0,T;\mathcal{Y})$ of $\UUU \bar y_n$. \EEE Note that $y$ is a.e. defined
in $(0,T)$ and is measurable. On the other hand, for all
$t\in [0,T]$ one has that $(y_n(t))$ is bounded in $\mathcal{Y}$, so
that, by possibly extracting a $t$-dependent subsequence one finds a
pointwise limit $\tilde y(t)$. The function $\tilde y$ is defined
everywhere, possibly being not measurable.

These two distinct limiting procedures are instrumental for passing to
the limit in the discrete setting. In particular, the limit passage in
the dissipation term involves $y$, whereas the pointwise in time limit in
the energy term calls for $\tilde y$.
As pointwise limits need not be unique,
the two functions $y$ and $\tilde y$ do not a.e. coincide. \UUU This
prevents from proving the energy balance. Nonetheless, the upper energy
estimate \eqref{eq:en1} holds true. In addition, the {energetic
  function-valued solution} is {\it approximable}, namely, is the
limit of discrete problem.

In case $y=\tilde y$ concide almost everywhere, one can
obtain the energy balance \eqref{eq:en100}, as well. This is for instance the
case for a \EEE
convex energy density $W$. Note that such convexity cannot be
reconciled with frame indifference but may still be of practical
use.  In fact, 
 in the elastoplastic
setting the elastic strain $\nabla y P^{-1}$
stays close to identity anyways. If $W$ is convex, due to the
positivity of the concentration part of our measure  $\nu$,  we may apply Jensen's inequality to infer that
    \begin{align*}
       &\int_\Omega W(\nabla  y(t){P}^{-1}(t), P(t),\nabla
         P(t))\,\d x-\langle l(t),  y(t)\rangle\\
      &\quad\leq \int_\Omega \langle\nu_{x,t}\tilde
        W( z_{e}, z_{ q_ r })\rangle + W_p(P(t)) \,\d x-\langle
        l(t),  y(t)\leq\la\!\la\nu_{t},W\ra\!\ra-\langle l(t),y(t)
        \rangle \\
      &\quad\leq \int_\Omega W(\nabla \tilde y(t){P}^{-1}(t), P(t),\nabla P(t))\,\d x-\langle l(t), \tilde y(t)\rangle,
    \end{align*}
   for a.e. $t\in (0,T)$. Testing now \eqref{eq:stab1} with $\hat
   y=y(t)$ and $\hat P= P(t)$ we infer that the above inequalities
   hold as equalities and so energies of the pointwise limits, the weak$\ast$
   limit, and the limiting measure coincide. In other words, one can state
   \eqref{eq:stab1}--\eqref{eq:en1} in terms of a single function only,
   either $y$ or $\tilde y$.

  Let us conclude with a comment on the relation between energetic
 measure-valued and energetic function-valued solutions. The two
 concepts are strictly connected, as they correspond to two
 different ways to pass to the limit in time-discrete
 approximations, called {\it discrete solutions} in the following, see the scheme \eqref{eq:minproof} below. In fact, within the class of solutions obtained as
 limits of discrete solutions, the value of the energy is independent of
 the solution concept. We record these facts in the following.

 \begin{Proposition}[Correspondence of
   solutions]\label{prop:correspondence}
   For any energetic
   measure-valued solution $(y,P,\nu)$ which is the limit of
    discrete solutions, see \eqref{eq:minproof}, there exists an energetic
   function-valued solution of the form $(y,P,\tilde y)$. Viceversa, for
   any energetic function-valued solution $(y,P,\tilde y)$ there exists an energetic
   measure-valued solution of the form $(y,P,\nu)$. In both cases, for a.e. $t\in [0,T]$ one
   has, 
\begin{align}
 \la\!\la\nu_{t},W\ra\!\ra-\langle l(t),  y(t) \rangle = \int_\Omega W(\nabla \tilde y(t){P}^{-1}(t), P(t),\nabla P(t))\,\d x-\langle l(t), \tilde y(t)\rangle.\label{eq:rela}
 \end{align}
\end{Proposition}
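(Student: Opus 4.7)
The overall strategy is that both solution concepts arise from the very same time-discrete scheme \eqref{eq:minproof}: the discrete deformations $(y_n)$ are uniformly bounded in $L^\infty(0,T;\mathcal{Y})$ and hence admit a weak$^*$ limit $y$ in that space; simultaneously, by Banach-Alaoglu at each fixed time, they admit pointwise-in-time weak limits $\tilde y(t)$ along $t$-dependent subsequences; while the triplets $(\nabla y_n P_n^{-1}, P_n, \nabla P_n)$ are bounded in the product of $L^\infty$-of-$L^{q_e}$, $L^\infty$-of-$L^{q_p}$, and $L^\infty$-of-$L^{q_r}$ and generate, up to a not relabeled subsequence, a Young measure $\nu$ via Lemma \ref{lemma: spartYM}. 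Because of the gradient penalization, $P_n$ is already strongly convergent to $P$, so $P$ is the same in both settings.

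First, I would address the implication from measure-valued to function-valued solutions. Given $(y,P,\nu)$ generated by a discrete sequence $(y_n, P_n)$, for each $t\in [0,T]$ I extract a $t$-dependent subsequence $(y_{n_k(t)}(t))$ converging weakly in $\mathcal{Y}$ to some $\tilde y(t)$ with $\tilde y(t)=\mathrm{id}$ on $\Gamma_D$. The stability \eqref{eq:stab1} then follows by passing to the limit in the discrete stability at time $t$: the left-hand side uses polyconvexity of $W_e$, continuity of the recession $W_e^\infty$, and strong compactness of $P_n(t)$ to provide lower semicontinuity of the energy, while the right-hand side uses the continuity of $\hat P\mapsto \mathcal{D}(K\nabla y(t), P(t), \hat P)$. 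Crucially, the dissipation involves $y$ through $K\nabla y$, not $\tilde y$, so it is unaffected by the pointwise extraction.

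Conversely, starting from a function-valued solution $(y,P,\tilde y)$ obtained as the limit of the same kind of discrete scheme, the triplet sequence $(\nabla y_n P_n^{-1}, P_n, \nabla P_n)$ is bounded in the appropriate product spaces thanks to energy coercivity, so Lemma \ref{lemma: spartYM} yields, along a further subsequence, a Young measure $\nu=(\nu_{x,t},\lambda,\nu_{x,t}^\infty)$. The measure-valued stability \eqref{def:stability} is then obtained by combining the Young-measure representation of the limit of the discrete energies as $\la\!\la\nu_t,W\ra\!\ra$ with the continuity of $\mathcal{D}(K\nabla y(t), P(t), \cdot)$; the boundary conditions, the weak$^*$ convergence of $y_n$ to $y$, and the strong convergence of $P_n$ to $P$ are preserved throughout.

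The crucial equality \eqref{eq:rela} now follows essentially for free. Both solutions share the same sequence $(y_n, P_n)$, and therefore the same dissipation $\mathrm{Diss}_{[0,t]}(K\nabla y, P)$, the same power integral $\int_0^t \la \dot l(r), y(r)\ra\,\d r$ (which only involves $y$, not $\tilde y$), and the same initial energy. Writing the measure-valued energy balance \eqref{def:energy_balance} against the function-valued energy balance \eqref{eq:en1} and subtracting yields \eqref{eq:rela} at every $t$ at which both balances hold. The main obstacle will be precisely that $\tilde y$ is constructed through $t$-dependent subsequence extractions, so measurability in time is not guaranteed; accordingly the function-valued stability must be verified pointwise via the discrete stability at the nearest mesh point $\tau_n(t)$, taking care that the two distinct limiting procedures are applied consistently while keeping the dissipation term and the external work anchored to the common weak$^*$ limit $y$.
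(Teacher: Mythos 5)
Your argument for the two existence implications (measure-valued to function-valued and vice versa) matches the paper's: in both cases one starts from the common underlying sequence of discrete solutions, extracts further, and replays the limit passages of Sections \ref{sec:convergence}--\ref{sec:lower} and \ref{proof:lemma:functions}. Where you genuinely diverge from the paper is in deriving the equality \eqref{eq:rela}. The paper proves it as a pair of one-sided inequalities obtained from the \emph{stability} conditions: first it tests the measure-valued stability \eqref{def:stability} with $(\hat y,\hat P)=(\tilde y(t),P(t))$ to get one bound, and then it tests the function-valued stability \eqref{eq:stab1} with $(\hat y,\hat P)=(\ove y_n(\hat t),\ove P_n(\hat t))$, integrates over a shrinking window $(t,t+\epsi)$, passes to the limits $n\to\infty$ and $\epsi\to 0$, and uses continuity of the dissipation function $\delta$ at $t$ to make the extra dissipation term vanish. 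You instead subtract the two \emph{energy balances} \eqref{def:energy_balance} and \eqref{eq:en1}: since both solutions arise from the same subsequence and hence share the same weak$^*$ limit $y$, the same $P$, the same dissipation $\mathrm{Diss}_{[0,t]}(K\nabla y,P)$, and the same right-hand side, the difference reduces immediately to \eqref{eq:rela} at every $t$ where both balances hold. This is a legitimate shortcut and arguably cleaner, but it does require first establishing the energy balance for both solutions, which you only touch on implicitly (you discuss stability explicitly for the forward direction but not the energy balance there). The paper's route via two stability inequalities is more self-contained in that it isolates precisely the information needed and does not presuppose having already pushed both energy balances all the way through. Either way you should make explicit that the energy balance \eqref{eq:en1} for the constructed function-valued solution is obtained by following Section \ref{proof:lemma:functions}, so that your subtraction step is justified.
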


A proof of Proposition \ref{prop:correspondence} is given in Section
\ref{sec:correspondence}.

\section{Proof of Theorem \ref{thm:existencemv}: existence of
  measure-valued solutions}\label{sec:proof}

\subsection{Coercivity of the energy}
A first consequence of the assumptions in Section
\ref{sec:assumptions} is that the sublevels of the energy $\mathcal
E(t,\cdot)$ are weakly compact. More precisely, we have the following
compactness result, whose proof is in \cite{Damage2019}.

\begin{Lemma}[Coercivity of the energy]\label{lemma:comp} There exists
  $c_4$ such that  
\begin{align*}
  \| \nabla y P^{-1}\|_{q_e}^{q_e}+  \|\nabla
  y\|_q^q+\|P\|_{q_p}^{q_p}+\|\nabla
  P\|_{ q_ r }^{ q_ r }\leq c_4(1+\mathcal{E}(t,y,P)) \quad
  \forall (t,y,P)\in [0,T]\times \mathcal{Q}.
\end{align*}
In particular, all sequences $(t_n,y_n,P_n)\in  [0,T]\times\mathcal{Q}$ with
     $   \sup_n\mathcal{E}(t_n,y_n,P_n)<\infty$ admit a weakly
     convergent subsequence  in $[0,T]\times\mathcal{Q}$.  
   \end{Lemma}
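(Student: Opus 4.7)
The plan is to combine the pointwise coercivity of the energy densities with a multiplicative H\"older decomposition of $\nabla y$, and then use the Dirichlet condition on $\Gamma_D$ together with Young's inequality to absorb the external-load term. From \eqref{eq:coercivity1}--\eqref{eq:coercivity2} and the nonnegativity of the gradient-regularization term we immediately obtain
\begin{align*}
  \mathcal{W}(y,P)\geq c\bigl(\|\nabla yP^{-1}\|_{q_e}^{q_e}+\|P\|_{q_p}^{q_p}+\|\nabla P\|_{q_r}^{q_r}\bigr)-C.
\end{align*}
The key identity is the pointwise factorization $\nabla y=(\nabla yP^{-1})\,P$, which by H\"older's inequality on $\Omega$ with exponents $q_e/q$, $q_p/q$ and (if $1/q_e+1/q_p<1/q$) the conjugate exponent yields
\begin{align*}
  \|\nabla y\|_{q}\leq C(\Omega)\,\|\nabla yP^{-1}\|_{q_e}\,\|P\|_{q_p},
\end{align*}
which is exactly the place where the assumption $1/q_e+1/q_p\leq 1/q$ from \eqref{esp1} is used.

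Next, since $y={\rm id}$ on $\Gamma_D$ with $\mathcal{H}^2(\Gamma_D)>0$, the Poincar\'e-type inequality gives $\|y\|_{W^{1,q}}\leq C(1+\|\nabla y\|_{q})$. Combining this with $l\in C^{1}([0,T];\mathcal{Y}^*)$ yields
\begin{align*}
  |\langle l(t),y\rangle|\leq C\bigl(1+\|\nabla yP^{-1}\|_{q_e}\|P\|_{q_p}\bigr).
\end{align*}
Since $q_e,q_p>1$ with $1/q_e+1/q_p<1$ (a fortiori from $1/q<1/3$), iterated Young's inequality gives, for every $\epsilon>0$, a constant $C_\epsilon$ with
\begin{align*}
  \|\nabla yP^{-1}\|_{q_e}\|P\|_{q_p}\leq \epsilon\bigl(\|\nabla yP^{-1}\|_{q_e}^{q_e}+\|P\|_{q_p}^{q_p}\bigr)+C_\epsilon.
\end{align*}
Writing $\mathcal{W}(y,P)=\mathcal{E}(t,y,P)+\langle l(t),y\rangle$, choosing $\epsilon$ small enough to absorb into the coercivity lower bound for $\mathcal{W}$, one obtains control of $\|\nabla yP^{-1}\|_{q_e}^{q_e}+\|P\|_{q_p}^{q_p}+\|\nabla P\|_{q_r}^{q_r}$ by $c_4(1+\mathcal{E}(t,y,P))$. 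A further application of Young (using $q/q_e+q/q_p\leq 1$) bounds $\|\nabla y\|_q^q\leq C(\|\nabla yP^{-1}\|_{q_e}^{q_e}+\|P\|_{q_p}^{q_p}+1)$, which closes the estimate.

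For the compactness statement, the bound together with Poincar\'e gives boundedness of $(y_n)$ in $\mathcal{Y}$ and of $(P_n)$ in $W^{1,q_r}(\Omega;\R^{3\times 3})$; compactness of $[0,T]$ provides a convergent subsequence of $(t_n)$, and reflexivity of the Sobolev spaces yields the weak subsequential limits. The only point requiring attention is that the weak limit of $P_n$ still takes values in $\SL$: this follows from $q_r>3$ via the Morrey embedding $W^{1,q_r}(\Omega)\hookrightarrow C(\overline\Omega)$, which upgrades the convergence of $P_n$ to uniform, so that $\det P_n\equiv 1$ passes to the limit pointwise. The main subtlety of the argument is the calibration of the two applications of Young's inequality so as to absorb the cross term $\|\nabla yP^{-1}\|_{q_e}\|P\|_{q_p}$ arising from H\"older on the multiplicative split; everything else is a routine combination of coercivity, Poincar\'e, and Sobolev embedding.
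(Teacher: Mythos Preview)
Your argument is correct. The paper itself does not give a proof of this lemma but simply refers to \cite{Damage2019}; your write-up supplies exactly the standard argument one would expect there: the multiplicative split $\nabla y=(\nabla yP^{-1})P$ combined with H\"older under \eqref{esp1}, Poincar\'e via the clamping on $\Gamma_D$, Young to absorb the load term, and Morrey ($q_r>3$) to keep the limit in $\SL$. There is therefore nothing to compare against in the paper, and your proof stands on its own.
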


By computing the power of external forces  we get
\begin{align*}
    |\partial_t\mathcal{E}(t,y,P)|&=|\langle\dot l(t),y\rangle|\leq
                                    \|\dot
                                    l(t)\|_{\mathcal{Y}^\ast}\|y\|_{\mathcal{Y}
                                    }
                                   \leq c
  \|\dot l(t)\|_{\mathcal{Y}^\ast}\|\nabla y\|_q\\
  & \leq c  \|\dot l(t)\|_{\mathcal{Y}^\ast}(1+\|\nabla y\|^q_q)\leq
    \|\dot l(t)\|_{\mathcal{Y}^\ast} \big( 1+c_4(1+\mathcal{E}(t,y,P))\big),
\end{align*}
where we have used the Korn
inequality and the coercivity of the energy from
Lemma \ref{lemma:comp}, as well. This in particular entail the growth
control on the power of external forces 
\begin{align}\label{eq:rateenergybnd}
    |\partial_t\mathcal{E}(t,y,P)|\leq c_5\big(
  1+\mathcal{E}(t,y,P)\big)  \quad \forall (t,y,P)\in [0,T]\times \mathcal{Q}
\end{align}
where $c_5 = (1+c_4)\|\dot l\|_{L^\infty(0,T;\mathcal{Y}^*)}$. Moving from
\eqref{eq:rateenergybnd},  the
Gronwall lemma gives
\begin{align}\label{eq:dGronwall}
    1+\mathcal{E}(t,y,P)\leq c_5
  \big(1+\mathcal{E}(s,y,P)\big)e^{c_5(t-s)}\quad \forall
  (y,P)\in\mathcal{Q}, \ 0\leq s\leq t\leq T.
\end{align}

\subsection{Discrete solutions} \label{sec:discrete_solutions} Assume to be given a uniform partition
of the time interval $[0,T]$
of time step $\tau=T/n$ $(n \in \Nz)$, that is
$\{0=t_0<t_1<...<t_{n-1}<t_n=T\}$ with $t_i=i\tau$. Given any vector $(w_i)_{i=0}^{n}$, we will denote by
$\overline{w}_n$ the corresponding piecewise constant left-continuous
interpolant on the partition, namely,
\begin{align*}
    &\overline{w}_n(t):=w_0 \quad \forall t \leq 0, \\
  &\overline{w}_n(t):=w_i \quad \forall
    t\in\left((i-1)\tau,i\tau\right], \ i=1,\dots,n,\\
    &\overline{w}_n(t):=w_n \quad \forall t\geq T.
\end{align*}
Given the initial condition $(y_0,P_0)\in\mathcal{Q}$
and setting $l_i=l(t_i)$ for $i=0,\dots,n$,
we term $(y_i,P_i)_{i=1}^n\in\mathcal{Y}^n\times\mathcal{P}^n$ a {\it
  discrete solution} if it incrementally solves the minimization problems
\begin{equation}\label{eq: dmin}
  (y_i,P_i)\in {\rm Argmin} \left\{\int_\Omega W(\nabla y
    P^{-1},P,\nabla P)\,\d x-\langle l_i,y \rangle+{\mathcal D}\left((K_\tau\nabla y)_{i-1},P_{i-1},P\right)\right\},
\end{equation}
for $i=1,..,n$.
In \eqref{eq: dmin}, we use the {\it discrete-in-time} version $K_\tau$ of
the operator $K$ defined as
\begin{align*}
    (K_\tau w)_{i-1}(x):= (\kappa\ast_\tau(\phi\star w))_{i-1}(x)
  \quad \text{for a.e.} \ x\in\Omega,
\end{align*}
where, for $\kappa_i:=\kappa(t_i)$, $i=0,\dots,n,$ we define the {\it
  discrete convolution} $(\kappa\ast_\tau w)_{i}$ as
$(\kappa\ast_\tau w)_{0}=0$ and
\begin{align*}
    (\kappa\ast_\tau w)_{i}:=\sum_{j=0}^{i }\tau\kappa_j w_{i -j},
  \quad \text{for} \ i=1,..,n,
\end{align*}
for any vector $(w_i)_{i=0}^n$, see \cite{Stefanelli2001}. Note in
particular that the minimization problem \eqref{eq: dmin} at level
$i=1$ is actually independent of $y_0$ as $(K_\tau \nabla y)_0=0$ by
definition. 

Under the convexity and continuity assumptions on the stored and
dissipation energy from the previous section, together with the
compactness from Lemma \ref{lemma:comp}, the existence of minimizers
for the problem \eqref{eq: dmin} is a straightforward implication of
the Direct Method of the calculus of variations. In particular, we
have the following. 
\begin{Lemma}[Existence of discrete solutions]\label{lemma:discrete_existence}
 There exist $(y_i,P_i)_{i=1}^n\in\mathcal{Q}^n$
 solving the minimization problems \eqref{eq: dmin}.
\end{Lemma}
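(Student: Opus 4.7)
The strategy is to proceed by induction on $i$, solving one minimization problem at a time by the Direct Method. For each $i=1,\dots,n$, assume $(y_{j},P_{j})$ has been constructed for $j=0,\dots,i-1$. Then the quantity $\bar F_{i-1}:=(K_\tau\nabla y)_{i-1}=\sum_{j=0}^{i-1}\tau\kappa_j\nabla y_{i-1-j}$ is fully determined and belongs to $L^{q}(\Omega;\Rz^{3\times 3})$, and, through the space mollifier $\phi\in W^{1,\infty}$ inside $K_\tau$, actually to $L^\infty(\Omega;\Rz^{3\times 3})$. Hence the functional to minimize,
\begin{align*}
J_i(y,P)&:=\int_\Omega W_e(\nabla y P^{-1})\,\d x+\int_\Omega W_p(P)\,\d x+\frac{\mu}{q_r}\int_\Omega|\nabla P|^{q_r}\,\d x\\
&\qquad -\langle l_i,y\rangle+\mathcal{D}\bigl(\bar F_{i-1},P_{i-1},P\bigr),
\end{align*}
depends only on the pair $(y,P)\in\mathcal{Q}$.

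\textbf{Coercivity and bounded minimizing sequences.} Using $\mathcal{D}\geq 0$ and the coercivity estimate from Lemma \ref{lemma:comp}, one gets $J_i(y,P)\geq \mathcal{E}(t_i,y,P)\geq -c$, so $m_i:=\inf_{\mathcal{Q}}J_i$ is finite (it is also attained by a finite value at, e.g., $(\mathrm{id},P_{i-1})$ if that pair lies in $\mathcal{Q}$; otherwise one takes a competitor of finite energy, whose existence is guaranteed by stability \eqref{eq:stab0} at the initial step and by induction afterwards). Pick a minimizing sequence $(y^k,P^k)\subset\mathcal{Q}$. From $J_i(y^k,P^k)\to m_i$, Lemma \ref{lemma:comp} and the Korn inequality used in \eqref{eq:rateenergybnd} yield a uniform bound on $\|y^k\|_{W^{1,q}}+\|P^k\|_{W^{1,q_r}}$.

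\textbf{Compactness.} Extract a (not relabeled) subsequence so that $y^k\rightharpoonup y$ in $W^{1,q}(\Omega;\Rz^3)$ and $P^k\rightharpoonup P$ in $W^{1,q_r}(\Omega;\SL)$. Since $q_r>3$, the Sobolev embedding yields $P^k\to P$ uniformly on $\overline\Omega$. Because $\det P^k\equiv 1$, one has $(P^k)^{-1}=\cof(P^k)^\TT$, a polynomial in $P^k$, hence $(P^k)^{-1}\to P^{-1}$ uniformly as well, and the limit $P$ satisfies $\det P=1$ a.e., i.e., $(y,P)\in\mathcal{Q}$.

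\textbf{Lower semicontinuity of the energy.} The gradient term $\int|\nabla P|^{q_r}\,\d x$ is convex and thus weakly lsc. For the plastic term, the uniform convergence $P^k\to P$ and continuity of $W_p$ give $\int W_p(P^k)\,\d x\to\int W_p(P)\,\d x$ by dominated convergence (using the polynomial bound from \eqref{eq:coercivity2} and boundedness of $(P^k)$ in $L^{q_p}$). The elastic term is the main technical point: writing
\begin{equation*}
W_e(\nabla y^k (P^k)^{-1})=\mathbb W\bigl(\nabla y^k(P^k)^{-1},\,\cof(\nabla y^k(P^k)^{-1}),\,\det(\nabla y^k(P^k)^{-1})\bigr)
\end{equation*}
with $\mathbb W$ convex lsc (assumption \eqref{eq:polyconvexity}), we exploit that $\cof$ and $\det$ are multiplicative under matrix product, $\det P^k=1$, and $(P^k)^{-1}\to P^{-1}$ strongly in $C(\overline\Omega)$. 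Hence $\nabla y^k(P^k)^{-1}\rightharpoonup\nabla y P^{-1}$ in $L^{q_e}$, $\cof(\nabla y^k)\rightharpoonup\cof(\nabla y)$ weakly in a suitable $L^s$, and $\det\nabla y^k\rightharpoonup\det\nabla y$ weakly in $L^r$ (these last two are the classical polyconvex weak-continuity results, which are applicable thanks to the exponent condition \eqref{esp1}). Multiplying by the uniformly convergent factors of $(P^k)^{-1}$ preserves the weak limits, and the standard polyconvex weak lsc theorem (Acerbi--Fusco/Ball) yields
\begin{equation*}
\int_\Omega W_e(\nabla y P^{-1})\,\d x\leq\liminf_{k\to\infty}\int_\Omega W_e(\nabla y^k(P^k)^{-1})\,\d x.
\end{equation*}
This is precisely the step already contained in the coercivity/compactness result of \cite{Damage2019} invoked for Lemma \ref{lemma:comp} and can be cited directly.

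\textbf{Continuity of the remaining terms.} The load term $\langle l_i,\cdot\rangle$ is linear and continuous on $\mathcal{Y}$, hence weakly continuous. For the dissipation, since $\bar F_{i-1}$ is fixed, continuity of $D$ in its last argument together with uniform convergence $P^k\to P$ and the growth bound \eqref{eq:D3} give, via dominated convergence, $\mathcal{D}(\bar F_{i-1},P_{i-1},P^k)\to\mathcal{D}(\bar F_{i-1},P_{i-1},P)$. Combining with the lsc above, $J_i(y,P)\leq\liminf_k J_i(y^k,P^k)=m_i$, so $(y,P)$ is a minimizer, and the induction closes. The only genuinely delicate step is the polyconvex weak lower semicontinuity under the multiplicative splitting $F_e=\nabla y\,P^{-1}$; everything else follows from standard arguments once the strong $C(\overline\Omega)$-convergence of $P^k$ is secured by $q_r>3$.
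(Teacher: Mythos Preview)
Your proof is correct and follows essentially the same approach as the paper: induction on $i$, the Direct Method with coercivity from Lemma~\ref{lemma:comp}, strong $C(\overline\Omega)$-convergence of $P^k$ and $(P^k)^{-1}$ via the embedding $W^{1,q_r}\hookrightarrow C$ for $q_r>3$, polyconvexity for the elastic term, and continuity of $D$ for the dissipation. The only cosmetic slip is in your displayed formula for $\bar F_{i-1}$, which omits the spatial mollifier $\phi\star$ (you correctly invoke it a line later to get $\bar F_{i-1}\in L^\infty$).
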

\begin{proof} Fix  $i=1,\dots,N$ and assume $(y_j,P_j)_{j=0}^{i-1}\in
  \mathcal{Q}^{i}$ to be given. We aim at showing that the minimization problem
  \begin{equation}
    \min \left\{\int_\Omega W(\nabla y P^{-1},P,\nabla P)\,\d x-\langle
   l_i,y \rangle+{\mathcal D}((K_\tau\nabla
     y)_{i-1},P_{i-1},P)\right\}\label{eq:minproof}
\end{equation}
 admits a solution $(y,P)\in \mathcal{Q}$. Note that $(K_\tau\nabla
     y)_{i-1}$ depends just on the values $(y_j)_{j=0}^{i-1}\in
  \mathcal{Y}^{i}$ and it is hence known at level $i$.

Let $(y_k,P_k)\subset\mathcal{Q}$ be a minimizing
 sequence for \eqref{eq:minproof}. Without loss of generality we can
 assume that $\sup_k \mathcal{E}(t_i,y_k,P_k)<\infty$. 
Owing to the coercivity from Lemma \ref{lemma:comp}  one has that 
\begin{align*}
   \| \nabla y_k P^{-1}_n\|_{q_e}^{q_e}+  \|\nabla
  y_k\|_{q}^q+\|P_k\|_{{q_p} }^{q_p}+\|\nabla
  P_k\|_{{ q_ r }}^{ q_ r }\leq c_4( 1+\mathcal{E}(t_i,y_k,P_k))<c.
\end{align*}
Up to not relabeled subsequences, we hence have that $
y_k\rightharpoonup  y$ in $W^{1,q}(\Omega;\mathbb{R}^{3\times 3})$,
$\nabla y_kP^{-1}_k \rightharpoonup  F$ in $L^{q_e}(\Omega;\mathbb{R}^{3\times 3})$,
and $P_k\rightharpoonup P$ in $W^{1, q_ r }(\Omega;\SL)$. In particular,
possibly extracting again one finds $  P_k\to P $ , 
\UUU $P^{-1}_k \to P^{-1}$ in $C(\overline\Omega;\mathbb{R}^{3\times
  3})$. \EEE This implies that $F= \nabla y 
P^{-1}$, so that $\nabla y_k
P^{-1}_k\rightharpoonup \nabla y 
P^{-1}$ in $ L^{q_e}(\Omega;\mathbb{R}^{3\times 3})$. The
  claim follows from the lower semicontinuity of the
functional in \eqref{eq:minproof}. More precisely, the above proved
convergences, the polyconvexity
of $W_e$, and the lower semicontinuity of $W_p$ entail that
\begin{align*}
  \int_\Omega W(\nabla y P^{-1},P,\nabla P)\,\d x-\langle
   l_i,y \rangle\leq \liminf_{k\to \infty}\int_\Omega W(\nabla y_k P^{-1}_k,P_k,\nabla P_k)\,\d x-\langle
   l_i,y_k \rangle.
\end{align*}
Recalling the  continuity of $D$ one concludes that $(y,P)$
solves \eqref{eq:minproof} at level $i$.
\end{proof}

\subsection{A priori estimates} \label{sec:apriori} Let  
$(y_i,P_i)_{i=0}^{ n }\in {\mathcal Q}^{n+1}$ be a discrete solution and fix
$i=1,\dots,n$. Owing to the minimality from \eqref{eq: dmin} and the
triangle inequality \eqref{eq:D2}, for all
$(\hat{y},\hat{P})\in\mathcal{Q}$ one has that
\begin{align*}
  &\mathcal{E}(t_i,y_i,P_i) + {\mathcal D}((K_\tau\nabla
  y)_{i-1},P_{i-1},P_i) \leq  \mathcal{E}(t_i,\haz y ,\haz P) + {\mathcal D}((K_\tau\nabla
  y)_{i-1},P_{i-1},\haz P)\\
  &\quad \leq \mathcal{E}(t_i,\haz y ,\haz P) + {\mathcal D}((K_\tau\nabla
  y)_{i-1},P_{i-1}, P_i ) + {\mathcal D}((K_\tau\nabla
  y)_{i-1},P_{i},\haz P).
\end{align*}
In particular, one obtains the stability 
\begin{align}
  \mathcal{E}(t_i,y_i,P_i) \leq \mathcal{E}(t_i,\haz y ,\haz P) + {\mathcal D}((K_\tau\nabla
y)_{i-1},P_{i}, \haz P ).\label{eq:discr_stab}
\end{align}

Again minimality ensures the validity of the discrete upper energy estimate
\begin{align}
  &\mathcal{E}(t_i,y_i,P_i) + {\mathcal D}((K_\tau\nabla
  y)_{i-1},P_{i-1},P_i) \leq  \mathcal{E}(t_i,y_{i-1},P_{i-1}) \nonumber \\
  &\quad = \mathcal{E}(t_{i-1},y_{i-1},P_{i-1}) +
    \int_{t_{i-1}}^{t_i} \partial_s {\mathcal E}(s, y_{i-1},P_{i-1})\,\d s.\label{eq:discrete_energy}
\end{align}
By summing for $i=1, \dots,m\leq n$ one obtains that  
\begin{align}
  &\mathcal{E}(t_m,y_m,P_m) + \sum_{i=1}^m\mathcal{D} ((K_\tau\nabla
  y)_{i-1},P_{i-1},P_i)   \nonumber \\
  &\quad \leq   \mathcal{E}(0,y_{0},P_{0}) +  
    \sum_{i=1}^m\int_{t_{i-1}}^{t_i} \partial_s {\mathcal E}(s, y_{i-1},P_{i-1})\,\d s.\label{eq:discrete_energy}
\end{align}
The bound \eqref{eq:rateenergybnd} and an application of the discrete Gronwall
Lemma entail that
\begin{equation}
  \label{eq:bound}
\max_{m}\mathcal{E}(t_m,y_m,P_m) + \sum_{i=1}^n\mathcal{D} ((K_\tau\nabla
  y)_{i-1},P_{i-1},P_i)   \leq  c.
\end{equation}

\subsection{Convergence}\label{sec:convergence}
Let now the partitions $\sigma_n:=\{0=t_0^n<t_1^n<...<t_{n}^n=T\}$ be
given with $t_i^n = i \tau_n$, $\tau_n = T/n$, and
$i=1,\dots,n$. Owing to Lemma \ref{lemma:discrete_existence}, for all
$n$ one can find a discrete solution $(y_i,P_i)_{i=1}^n\in
\mathcal{Q}^{n}$. These satisfy  the discrete stability 
\eqref{eq:discr_stab},  as well as bound \eqref{eq:bound}.  In particular, the
corresponding  piecewise constant interpolants $(\overline
y_{\tau_n},\overline P_{\tau_n})$ satisfy for all $t\in [0,T]$ the stability inequality
\begin{align}
     &\mathcal{E}(\overline{t}_{ n}(t),\overline{y}_{ n}(t),\overline{P}_{n}(t))\leq
  \mathcal{E}(\overline{t}_{ n}(t),\hat{y},\hat{P})+
  \mathcal{D}(\overline{(K_{\tau_n}\nabla y)}_{n}(t-\tau_n),
  \overline{P}_{n}(t), \hat{P}) \nonumber\\
  &\qquad \forall (\hat y,\hat
  P)\in \mathcal{Q},\label{eq: istab}
\end{align}
which is nothing  but  
\eqref{eq:discr_stab}.
Moreover, the discrete upper energy estimate
\eqref{eq:discrete_energy} reads
\begin{align}\label{eq: iuee}
   & \mathcal{E}(\overline{t}_{ n}(t),\overline{y}_{n}(t),\overline{P}_{n}(t))+{\rm
  Diss}_{[0,t]}(\overline{(K_{\tau_n}\nabla
  y)}_{n}(\cdot -\tau_n),\overline{P}_{n})\\
  &\quad \leq
    \mathcal{E}(0,y_0,P_0) +
    \int_{0}^{\overline{t}_{ n}(t)}\partial_r\mathcal{E}(r,\overline{y}_{n}(r-\tau_n),\overline{P}_{n}(r-\tau_n))\,\d r,
\end{align}
for all $t\in\sigma_n$. Eventually, the bound
\eqref{eq:bound} can be rewritten as  
\begin{align}\label{eq:energybounds}
    \sup _{t\in [0,T]} \mathcal{E}(\overline{t}_n(t),\overline{y}_{n}(t),\overline{P}_{n}(t))+ {\rm
  Diss}_{[0,T]}(\overline{(K_{\tau_n}\nabla y)}_{n}(\cdot -\tau_n), \overline{P}_{n} )\leq c.
\end{align}
The coercivity from Lemma \ref{lemma:comp} entails that  
\begin{align}\label{eq:seqbnds}
     \sup_{t\in [0,T]}\left(\| \nabla \ove y_n(t)\ove P^{-1}_n(t)\|_{q_e}^{q_e}+ \|\nabla
  \ove y_n(t)\|_q^q +  \|\ove P_n(t)\|_{q_p}^{q_p}+
  \|\nabla \ove P_n(t)\|_{ q_ r }^{  q_ r  } \right)<c.
\end{align}
By possibly extracting not relabeled subsequences, the extended version of the generalised Helly principle from Theorem
\ref{thm:Helly} ensures that there exists a nondecreasing
function
\begin{equation}\label{eq:delta}
  \delta:[0,T]\to [0,\infty)
\end{equation}
such that 
\begin{align}
    &\big(\nabla \ove y_n \ove P^{-1}_n, \ove P_n, \nabla \ove
      P_n\big)\overset{Y}{\longrightarrow}
      \nu:=\left(\nu_{x,t},\lambda,\nu_{x,t}^\infty\right), \label{6.14} \\
    & \ove P_n(t)\to P(t) \quad \text{in} \
      L^\infty(\Omega;\mathbb{R}^{3\times 3})\quad \forall t \in[0,T], \label{eq:HellyA}\\
     &  {\rm Diss}_{[0,t]}(\overline{(K_{\tau_n}\nabla
       y)}_{n}(\cdot -\tau_n),\ove P_{n} )\to \delta(t)\quad \forall t \in [0,T], \label{eq:HellyB}\\[1mm]
    &   {\rm Diss}_{[s,t]}(  K\nabla y , P )\leq
      \delta(t)-\delta(s) \quad \forall 0\leq s<t\leq T, \label{eq:HellyC}
\end{align}
where the generated {Young measure} $\left(\nu_{x,t},\lambda,\nu_{x,t}^\infty\right)$ satisfies
\begin{align*}
    \sup_{t\in [0,T]}\int_\Omega\langle\nu_{x,t},| z_{e}
  |^{q_e}+| z_{p} |^{q_p}+| z_{ q_ r } |^{ q_ r }\rangle\, \d x < \infty.
\end{align*}
Note that the marginal of $\nu_{x,t}$ generated by 
	$(\nabla \ove P_n)$ is a $ q_ r $-{\it Young measure}, while the marginals generated 
	by $(\ove P_n)$ and $(\nabla \ove y_n \ove P^{-1}_n)$ are a
        $q_p$- and a $q_e$-{\it Young measure}, respectively. In
        particular, one has that
	\begin{align}
		\nabla \ove y_n \ove
          P^{-1}_n&\overset{\ast}{\rightharpoonup}\la\nu_{x,t},
                    z_e \ra=: \nabla y P^{-1}   \quad\text{in\:} L^{\infty}(0,T;L^{q_e}(\Omega)),\label{eq:01}\\
		\ove
          P_n&\overset{\ast}{\rightharpoonup}\la\nu_{x,t}, z_p \ra =: P  \quad\text{ in\:} L^{\infty}(0,T;L^{q_p}(\Omega)),\\
		\nabla \ove
          P_n&\overset{\ast}{\rightharpoonup}\la\nu_{x,t}, z_{q_r}
                \ra =: \nabla P \quad\text{in\:} L^{\infty}(0,T;L^{ q_ r }(\Omega)).\label{eq:03}
	\end{align}

 \subsection{Upper energy estimate}       \label{sec:energy-balance}
In order to prove that $(P,y,\nu)$ is indeed an energetic
measure-valued solution according to Definition \ref{def:ensol} we are
left with checking the energy
balance \eqref{def:energy_balance} and the  stability
\eqref{def:stability}. Let us start by proving an upper energy estimate.

For any $0\leq \hat t <T$ the discrete upper energy
  estimate \eqref{eq: iuee} can be rewritten as 
\begin{align}
    &\int_\Omega W(\nabla \ove y_n(\hat t) \ove P^{-1}_n(\hat t),\ove
  P_n(\hat t),\nabla \ove P_n(\hat t))\,\d x-\langle \ove l_n(\hat t),\ove y_n(\hat t) \rangle
      \nonumber\\[1mm]
  &\qquad + {\rm Diss}_{[0, \ove t_n(\hat t)]}(\overline{(K_{\tau_n}\nabla
  y)}_{n}(\cdot -\tau_n),\ove P_{n} )\nonumber\\[2mm]
    &\quad \leq \int_\Omega W(\nabla y_0  P^{-1}_0, 
      P_0,\nabla P_0)\,\d x-\langle \ove l(0),y_0\rangle -\int_{0}^{\ove t_n(\hat t)}\la\Dot{l}(r),\ove y_n(r-\tau_n)\ra \,\d r.\label{eq:dopo}
\end{align}

From the very definition of constant interpolants we have
\begin{align*}
    {\rm Diss}_{[0, \ove t_n(\hat t)]}(\overline{(K_{\tau_n}\nabla
  y)}_{n}(\cdot -\tau_n),\ove P_{n} )={\rm Diss}_{[0,\hat t]}(\overline{(K_{\tau_n}\nabla
  y)}_{n}(\cdot -\tau_n),\ove P_{n}).
\end{align*}

The forcing terms can be rewritten as 
\begin{align*}
    \int_{0}^{\ove t_n(\hat t) }\la\Dot{l}(r),\ove y_n(r-\tau_n)\ra \,\d r=\int_{0}^{\hat
  t}\la\Dot{l}(r),\overline y_n(r-\tau_n)\ra \,\d r+I_n(\hat t)
\end{align*}
where the residual term $I_n(\hat t)$ is given by
\begin{align*}
 I_n(\hat t)=\int_{\hat t}^{\ove t_n(\hat t)}\la\Dot{l}(r),\ove y_n(r-\tau_n)\ra \,\d r.
\end{align*}
Let us remark that for all $  \hat t\in [0,T]$ one has
$ I_n(\hat t)\to 0$ as $n\to \infty$. Indeed, owing to the
regularity of $\dot l$ and the energy bounds \eqref{eq:seqbnds} we infer
\begin{align}
  | I_n(\hat t)| &=
    \left| \int_{\hat t}^{\ove t_n(\hat t)}\la\Dot{l}(r),y_n(r-\tau_n)\ra\,
    \d r\right| \leq c
       \int_{\hat
    t}^{\ove t_n(\hat t)}\|\Dot{l}(r)\|_{\mathcal{Y}^\ast} \,\d
                   r \leq c\tau_n\to 0 \label{eq:zeroterms}
\end{align}
as $n\to\infty$.

The discrete upper energy estimate \eqref{eq:dopo} can hence be
rewritten as 
\begin{align}
    &\int_\Omega W(\nabla \ove y_n(\hat t) \ove P^{-1}_n(\hat t),\ove
  P_n(\hat t),\nabla \ove P_n(\hat t))\,\d x-\langle \ove l_n(\hat t),\ove y_n(\hat t) \rangle
      \nonumber\\[1mm]
  &\qquad + {\rm Diss}_{[0,\hat t]}(\overline{(K_{\tau_n}\nabla y)}_{n}(\cdot -\tau_n),\ove
  P_{n} )\nonumber\\[2mm]
   &\quad  \leq  \int_\Omega W(\nabla y_0  P^{-1}_0, 
      P_0,\nabla P_0)\,\d x-\langle \ove l(0),y_0\rangle
     -\int_{0}^{\hat t}\la\Dot{l}(r),\ove y_n(r-\tau_n)\ra \,  \d r
     + I_n(\hat
     t) . \label{eq:dopo2}
\end{align}

Fix now $ t \in (0,T)$. Along the argument, the choice
of $t$ will be progressively restricted. We anticipate that,
at each step, a null set of points $t$ is
discarded. 

Integrate  
\eqref{eq:dopo2} with respect to $\hat t$ over $ \in (t,t+\epsi)$ for some $\epsi \in
(0,T-t)$. By further dividing by $\epsi$ we obtain
\begin{align}
    &\frac{1}{\epsi}\int_{t}^{t+\epsi}\!\! \int_\Omega W(\nabla \ove
      y_n(\hat t) \ove P^{-1}_n(\hat t),\ove
  P_n(\hat t),\nabla \ove P_n(\hat t))\,\d x\, \d \hat
      t-\frac{1}{\epsi}\int_{t}^{t+\epsi}\langle \ove l_n(\hat t),\ove
      y_n(\hat t) \rangle \,\d\hat t\nonumber\\
  &\qquad 
  + \frac{1}{\epsi}\int_{t}^{t+\epsi} {\rm Diss}_{[0,\hat
    t]}(\overline{(K_{\tau_n}\nabla y)}_{n}(\cdot -\tau_n),\ove
  P_{n} )\,\d \hat t\nonumber\\
   &\quad  \leq  \int_\Omega W(\nabla y_0  P^{-1}_0, 
      P_0,\nabla P_0)\,\d x-\langle \ove l(0),y_0\rangle
     \nonumber\\
  &\qquad -\frac{1}{\epsi}\int_{t}^{t+\epsi}\!\! \int_{0}^{\hat t}\la\Dot{l}(r),\ove y_n(r-\tau_n)\ra
    \,\d r\, \d\hat t+ \frac{1}{\epsi}\int_{t}^{t+\epsi}  I_n(\hat t)\,\d \hat t. \label{eq:dopo33}
\end{align}

We next pass to the limit term by term in \eqref{eq:dopo33}, first for $n\to
\infty$, and subsequently for $\epsi \to 0$. The properties of the
Young measure $\nu$ and the  structural result from Lemma
\ref{lemma: spartYM} guarantee that  the concentration measure $\lambda$ admits a disintegration of the
form $\lambda(\d x,\d t)=\eta_t(\d x)\otimes \d t$, where $t\mapsto \eta_t$ is a bounded measurable map from $[0,T]$ into
 $\mathcal{M}^+(\overline{\Omega})$. In particular, we have 
\begin{align} 
    &\lim_{\epsi \to 0} \lim_{n\to \infty}\frac{1}{\epsi}\int_{t}^{t+\epsi}\!\! \int_\Omega W(\nabla \ove y_n(\hat t) \ove P^{-1}_n(t),\ove
      P_n(\hat t),\nabla \ove P_n(\hat t))\,\d x\, \d \hat t\nonumber \\
  &\quad =\int_\Omega \la\nu_{x,t},\tilde
    W( z_e, z_{ q_ r } )\ra +W_p(P(t))\,\d x +
    \int_{\overline{\Omega}} \la\nu_{x,t}^\infty,\tilde
    W^\infty( z_e, z_{ q_ r } )\ra \,\d\eta_{t}(x) \label{eq:dopo3}
\end{align}
for a.e. $t\in (0,T)$.
The specific form of the plastic term ensues from the pointwise $L^\infty$ strong
convergence of $\ove P_n(t)$ and an application of the Dominated
Convergence Theorem. In particular, one has that
$$\lim_{\epsi \to 0} \lim_{n\to \infty}\frac{1}{\epsi}\int_{t}^{t+\epsi}\!\! \int_\Omega W_p(\ove P_n(\hat
t) )\,\d x \, \d \hat t = \int_\Omega W_p( P(
t) )\,\d x.$$

By further restricting to Lebesgue points of the   function $t\mapsto
\la l(t), y(t)\ra$, the regularity of $l$ entails that, for a.e. $ 
t \in (0,T)$,
\begin{align}
  &\lim_{\epsi \to 0} \lim_{n\to \infty}\frac{1}{\epsi}\int_{t}^{t+\epsi} \la \ove l_n (\hat t), \ove
    y_n(\hat t)\ra \,\d\hat t = \la l(t), y(t)\ra.\label{eq:L1}
\end{align}

For fixed $\epsi$, the limit $n\to \infty$ in the dissipation term reads
\begin{align*}
  &\lim_{n\to \infty}\frac{1}{\epsi}\int_{t}^{t+\epsi} {\rm Diss}_{[0,\hat t]}(\overline{(K_{\tau_n}\nabla y)}_{n}(\cdot -\tau_n),\ove
  P_{n} )\,\d \hat t \stackrel{\eqref{eq:HellyB}}{=} \frac{1}{\epsi}\int_{t}^{t+\epsi} \delta(\hat
    t)\,\d \hat t - \delta(0).
\end{align*}
Hence, by taking $\epsi\to 0$, for a.e. $ t \in (0,T)$ one gets
\begin{align}
  &\lim_{\epsi\to 0}\lim_{n\to \infty}\frac{1}{\epsi}\int_{t}^{t+\epsi} {\rm Diss}_{[0,\hat t]}(\overline{(K_{\tau_n}\nabla y)}_{n}(\cdot -\tau_n),\ove
    P_{n} )\,\d \hat t =   \delta(t) - \delta(0) \stackrel{\eqref{eq:HellyC}}{\geq} {\rm Diss}_{[0,t]}(K\nabla y,
    P ).\nonumber
\end{align}

Owing to to the continuity of $\Dot{l}$ one can compute that
\begin{align}
  &\lim_{\epsi\to 0}\lim_{n\to \infty}\frac{1}{\epsi}\int_{t}^{t+\epsi}\!\! \int_{0}^{\hat t}\la\Dot{l}(r),\ove y_n(r-\tau_n)\ra
    \,\d r\, \d\hat t \nonumber\\
  &\quad= \lim_{\epsi\to 0}\lim_{n\to
  \infty}\frac{1}{\epsi}\int_{t}^{t+\epsi}\!\! \int_{-\tau_n}^{\hat
  t-\tau_n}\la\Dot{l}(r+\tau_n),\ove y_n(r)\ra
  \,\d r\, \d\hat t \nonumber\\
  & \quad= \int_0^t \la \Dot{l}(r), y(r)\ra
    \,\d r. \label{eq:II}
\end{align}
By using estimate \eqref{eq:zeroterms} we readily get that
\begin{equation}\lim_{\epsi\to 0}\lim_{n\to
  \infty} \frac{1}{\epsi}\int_{t}^{t+\epsi}   I_n(\hat t)\,\d \hat t   =0.\label{eq:I}
\end{equation}
 
The upper energy estimate
\begin{align}
  &\la\!\la\nu_t,W\ra\!\ra-\langle l(t),y(t)\rangle+{\rm
                                       Diss}_{[0,t]}(K\nabla y,P) \nonumber\\
  &\quad \leq \int_\Omega W(\nabla y_0 P^{-1}_0, P_0, \nabla P_0)\,\d x-\langle l(0),y_0 \rangle -\int_0^t
    \la \dot l(r),y(r)\ra \,\d r \label{eq:oppo}
    \end{align}
for a.e. $t\in (0,T) $ follows by collecting
\eqref{eq:dopo3}--\eqref{eq:I} and recalling definition
\eqref{eq:action}.

\subsection{Stability}\label{sec:stability}
To check the stability \eqref{def:stability} we fix
$(\hat y, \hat P)\in \mathcal{Q}$ and integrate the discrete stability
\eqref{eq: istab} over $(t,t+\epsi)$ for $t\in (0,T)$ given and
$\epsi \in (0,T-t)$. By further dividing by $\epsi$ we get
\begin{align}
  &\frac{1}{\epsi}\int_{t}^{t+\epsi}\!\! \int_\Omega W(\nabla \ove
      y_n(\hat t) \ove P^{-1}_n(\hat t),\ove
  P_n(\hat t),\nabla \ove P_n(\hat t))\,\d x\, \d \hat
      t-\frac{1}{\epsi}\int_{t}^{t+\epsi}\langle \ove l_n(\hat t),\ove
      y_n(\hat t) \rangle \,\d\hat t\nonumber\\
   &\quad  \leq \frac{1}{\epsi}\int_{t}^{t+\epsi}\!\! \int_\Omega
     W(\nabla \hat y  \hat  P^{-1} ,\hat P,\nabla \hat P)\,\d x\, \d \hat
     t- \frac{1}{\epsi}\int_{t}^{t+\epsi}\langle \ove l_n(\hat  t ),\hat y
     \rangle \,\d\hat t \nonumber \\
  &\qquad+  \frac{1}{\epsi}\int_{t}^{t+\epsi}\!\!
     \!\!\int_\Omega D(\overline{(K_{\tau_n} \nabla y)}_n(\hat t-\tau_n),\ove
     P_n(\hat t ),\hat P) \,\d \hat t. \label{eq:dopo5}
\end{align}
By taking $n\to \infty$ first and then $\epsi \to 0$, the convergences
\eqref{eq:dopo3} and \eqref{eq:L1} ensure that the  left-hand  side of
\eqref{eq:dopo5} converges to the  left-hand  side of
\eqref{def:stability} for a.e. $t\in(0,T)$. On the other hand, one readily checks that 
\begin{align} 
 &\lim_{\epsi\to 0}\lim_{n\to \infty} \left( \frac{1}{\epsi}\int_{t}^{t+\epsi}\!\! \int_\Omega
     W(\nabla \hat y  \hat  P^{-1} ,\hat P,\nabla \hat P)\,\d x\, \d \hat
     t- \frac{1}{\epsi}\int_{t}^{t+\epsi}\langle \ove l_n(\hat t),\hat y
  \rangle \,\d\hat t \right)\nonumber\\
  &\quad =\int_\Omega
     W(\nabla \hat y  \hat  P^{-1} ,\hat P,\nabla \hat P)\,\d x 
     -  \langle \l(t),\hat y
  \rangle  \label{eq:dopo6}
\end{align}
for a.e. $t\in (0,T)$.

As regards the dissipation term, let us start by computing
\begin{align}
&\left| D(\overline{(K_{\tau_n} \nabla y)}_n(\hat t-\tau_n),\ove
     P_n(\hat t ),\hat P) - D( K \nabla y)(\hat t), P(\hat t ),\hat
  P)\right|\nonumber\\
  & \ \quad \leq \left| D(\overline{(K_{\tau_n} \nabla y)}_n(\hat t-\tau_n),\ove
    P_n(\hat t ),\hat P) - D(K\nabla y(\hat t),\ove
    P_n(\hat t ),\hat P) \right|\nonumber\\
    &\qquad + \left| D(K\nabla y(\hat t),\ove
    P_n(\hat t ),\hat P) -  D( K \nabla y(\hat t), P(\hat t ),\hat
      P)\right|\nonumber\\
  &\quad \stackrel{\eqref{eq:D4}}{\leq} c |\overline{ (K_{\tau_n} \nabla
    y)}_n(\hat t-\tau_n)- K\nabla y(\hat t)|\,\haz D(\ove P_n(\hat t),
    \hat P) \nonumber\\
    &\qquad + \left| D(K\nabla y(\hat t),\ove
    P_n(\hat t ),\hat P) -  D( K \nabla y (\hat t), P(\hat t ),\hat
      P)\right|\nonumber\\
  & \stackrel{\eqref{eq:D11}, \eqref{eq:D3}}{\leq} c | \overline{ (K_{\tau_n} \nabla
    y)}_n (\hat t-\tau_n)- K\nabla y(\hat t)|\, (1 + |\ove P_n(\hat
    t)| + | \hat P| )     \nonumber\\
    &\qquad + \left| D(K\nabla y(\hat t),\ove
    P_n(\hat t ),\hat P) -  D( K \nabla y (\hat t), P(\hat t ),\hat
      P)\right|.
  \label{eq:dopo7}
\end{align}
Using \cite[Prop. 4.4]{Stefanelli2001} we can control
\begin{align}
  &\|\overline{ (K_{\tau_n} \nabla
    y)}_n(\cdot -\tau_n)- K\nabla y\|_{L^1(0,T;L^q(\Omega;\Rz^{3\times
  3}))}\nonumber\\
  &\quad \leq \tau_n c (1+\|\nabla \ove y_n\|_{L^1(0,T;L^q(\Omega;\Rz^{3\times
  3}))}) + \| K\nabla (\ove y_n - y)\|_{L^1(0,T;L^q(\Omega;\Rz^{3\times
  3}))}.\label{eq:vol}
\end{align}
Recalling that $\ove P_n^{-1}$ is uniformly bounded in space and time,
by integrating \eqref{eq:dopo7} on $\Omega$ and $(t,t+\epsi)$ we get
\begin{align}
  &\frac{1}{\epsi}\int_{t}^{t+\epsi}\!\! \int_\Omega \left| D(\overline{(K_{\tau_n} \nabla y)}_n(\hat t-\tau_n),\ove
     P_n(\hat t ),\hat P) - D( K \nabla y)(\hat t), P(\hat t ),\hat
  P)\right|\d x\,\d \hat t \nonumber\\
  &\quad \stackrel{\eqref{eq:dopo7}}{\leq} \tau_n c  (1+\|\nabla \ove y_n\|_{L^1(0,T;L^q(\Omega;\Rz^{3\times
  3}))}) + c  \| K\nabla (\ove y_n - y)\|_{L^1(0,T;L^q(\Omega;\Rz^{3\times
    3}))} \nonumber\\
  &\qquad+  \frac{1}{\epsi}\int_{t}^{t+\epsi}\!\! \int_\Omega \left|D(K\nabla y(\hat t),\ove
    P_n(\hat t ),\hat P) -  D( K \nabla y(\hat t), P(\hat t ),\hat
      P)\right|\d x\,\d \hat t.\label{eq:dopo8}
\end{align}
As $K\nabla (\ove y_n - y)\to 0$ in $L^1(0,T;L^q(\Omega;\Rz^{3\times
  3}))$ as effect of the compact operator $K$, by first taking the limit
$n\to \infty$, also using the continuity of $D$ as well as the
bounds \eqref{eq:D3}, and then the limit $\epsi \to 0$ in \eqref{eq:dopo8} we obtain  
\begin{align}
   &\lim_{\epsi \to 0}\lim_{n\to \infty} \frac{1}{\epsi}\int_{t}^{t+\epsi}\!\! \int_\Omega D(\overline{(K_{\tau_n} \nabla y)}_n(\hat t-\tau_n),\ove
  P_n(\hat t ),\hat P)\,\d x\, \d \hat t \nonumber\\
  &\quad = \lim_{\epsi \to 0}\lim_{n\to \infty} \frac{1}{\epsi}\int_{t}^{t+\epsi}\!\! \int_\Omega D(K\nabla y(\hat t),\ove
    P_n(\hat t ),\hat P) \,\d x\, \d \hat t = \int_\Omega D( K \nabla y (t), P( t ),\hat
      P)\,\d x.\nonumber
\end{align}
Owing to \eqref{eq:dopo6} and \eqref{eq:dopo8}, inequality
\eqref{eq:dopo5} corresponds to the stability \eqref{def:stability}.

\subsection{Lower energy estimate}\label{sec:lower}
We are left with proving the converse inequality to \eqref{eq:oppo},
namely,
\begin{align}
  &\la\!\la\nu_t,W\ra\!\ra-\langle l(t),y(t)\rangle+{\rm
                                       Diss}_{[0,t]}(K\nabla y,P) \nonumber\\
  &\quad \geq \int_\Omega W(\nabla y_0 P^{-1}_0, P_0, \nabla P_0)\,\d x-\langle l(0),y_0 \rangle -\int_0^t
    \la \dot l(r),y(r)\ra \,\d r \label{eq:oppo2}
    \end{align}
    for a.e. $t\in (0,T) $. This classically follows from stability
\eqref{def:stability}, see \cite[Prop.~5.7]{Mielke2005a}. As the map
$t \in [0,T] \mapsto \la \Dot{l}(t),y(t)\ra $
is integrable, one can use Lemma \ref{lemma:DalMaso}, which is a
straightforward extension of 
\cite[Lemma 4.12]{DalMaso2004} and, given any null set $N\subset (0,T)$ find sequence of partitions $\{0=s_0^m<\dots
<s_{M^m}^m=t\}$, $t \in (0,T)$, indexed by $m\in \Nz$ with $\eta_m:=\max_{j=1,\dots,M^m}(s_j^m
- s_{j-1}^m) \to 0$ as $m\to \infty$, such that $(s_j^m)_{j=0}^{M^m}
\not \in N$, and the Lebesgue integral can be approximated by
the corresponding Riemann sum, namely,
\begin{align}
  \lim_{m\to \infty }\sum_{j=1}^{M^m} \la \Dot{l}(s^m_j), y(s^m_j)\ra
  (s^m_{j}-s^m_{j-1}) = \int_0^t \la \Dot{l}(r),y(r)\ra \,\d r.
  \label{eq:per_en4}
\end{align}
In particular, $N$ can be taken to be the complement of the 
points  in
$(0,T)$ where the stability \eqref{def:stability} and
convergences properties used below hold.

Fix $j=1,\dots
,M^m$, $\epsi\in (0,T-t)$, and choose $(\hat y,\hat P) =
(y(\hat t),P(\hat t))$ with $\hat t\in (s_{j }^m,s_{j}^m+\epsi)$ in
\eqref{def:stability} at $s_{j-1}^m$, and
integrate on $(s_{j}^m,s_{j}^m+\epsi)$ getting
\begin{align*}
  &\la\!\la\nu_{ s_{j-1}^m},W\ra\!\ra-\langle l(s_{j-1}^m),y (s_{j-1}^m)
                       \rangle  \leq \frac{1}{\epsi}\int_{s_{j}^m}^{s_{j}^m+\epsi}\!\!
  \int_\Omega W(\nabla y(\hat t) {P}^{-1}(\hat t), P(\hat t),\nabla
  P(\hat t))\,\d x\, \d \hat t\nonumber\\&\quad - \frac{1}{\epsi}\int_{s_{j}^m}^{s_{j}^m+\epsi}\langle l(s_{j-1}^m), 
  y(\hat t)\rangle\, \d \hat t
  + \frac{1}{\epsi}\int_{s_{j}^m}^{s_{j}^m+\epsi}  {\mathcal D} ( K\nabla y(s_{j-1}^m),
    P(s_{j-1}^m), P(\hat t))\, \d \hat t. 
\end{align*}
We now assume that the points
$(s_j^m)_{j=1}^{M^m}$ are such that the above right-hand side converges as
$\epsi \to 0$,
along the lines of Subsection \ref{sec:stability}. By passing to the
limit we then get
\begin{align}
  &\la\!\la\nu_{ s_{j-1}^m},W\ra\!\ra-\langle l(s_{j-1}^m),y (s_{j-1}^m)
                       \rangle  \nonumber \\
  &\quad \leq  
    \la\!\la\nu_{ s_{j}^m},W\ra\!\ra  -  \langle l(s_{j-1}^m), 
  y(s_{j}^m)\rangle 
  +   {\mathcal D} ( K\nabla y(s_{j-1}^m),
    P(s_{j-1}^m), P(s_{j}^m))  \nonumber\\
  &\quad = \la\!\la\nu_{ s_{j}^m},W\ra\!\ra  -  \langle l(s_{j}^m), 
  y(s_{j}^m)\rangle + {\mathcal D} ( K\nabla y(s_{j-1}^m),
    P(s_{j-1}^m), P(s_{j}^m))
   \nonumber  \\
  &\qquad + \langle  l(s_{j}^m)-  l(s_{j-1}^m),  y(s_{j}^m)\rangle\label{eq:sum1}
\end{align}
An analogous estimate can be obtained for $j=0$, by using the
stability of the initial data\eqref{eq:stab0}. In particular, we have
that
\begin{align}
  &\int_\Omega W(\nabla y_0 P^{-1}_0, P_0, \nabla P_0)\,\d x-\langle
    l(0),y_0 \rangle \nonumber \\
  &\quad \leq    \la\!\la\nu_{ s_{1}^m},W\ra\!\ra  -  \langle l(s_{1}^m), 
  y(s_{1}^m)\rangle + {\mathcal D} ( K\nabla y(0),
    P_0, P(s_{1}^m))
   \nonumber  \\
  &\qquad + \langle  l(s_{1}^m)-  l(0),  y(s_1^m)\rangle\label{eq:sum2}
\end{align}
Summing up \eqref{eq:sum1} for $j= 2,  \dots,M^m$ and 
adding it to  \eqref{eq:sum2} one obtains
\begin{align}
  &\int_\Omega W(\nabla y_0 P^{-1}_0, P_0, \nabla P_0)\,\d x-\langle
    l(0),y_0 \rangle  \leq  
    \la\!\la\nu_{ t},W\ra\!\ra  -  \langle l(t), 
    y(t )\rangle \nonumber \\
  &\qquad  + \sum_{j=1}^{M^m}  {\mathcal D} ( K\nabla y(s_{j-1}^m),
    P(s_{j-1}^m), P(s_{j}^m))+ \sum_{j=1}^{M^m} \langle  l(s_{j}^m)-  l(s_{j-1}^m),  y(s_{j}^m)\rangle
 . \label{eq:oppo3}
\end{align}

To control the dissipation term in the right-hand side of
\eqref{eq:oppo3} we compute
\begin{align} 
  &\sum_{j=1}^{M^m}D(K\nabla y(s^m_{j-1}),
    P(s^m_{j-1}),P(s^m_{j}))\nonumber\\
  &\quad \stackrel{\eqref{eq:D0}}{=}\sum_{j=1}^{M^m}D(K\nabla y(s^m_{j}),
    P(s^m_{j}),P(s^m_{j-1})) \nonumber\\
  &\qquad + \sum_{j=1}^{M^m}\left(D(K\nabla y(s^m_{j-1}),
    P(s^m_{j}),P(s^m_{j-1}))   - D(K\nabla y(s^m_{j}),
    P(s^m_{j}),P(s^m_{j-1})) \right)\nonumber\\
  &\quad \stackrel{\eqref{eq:D4}}{\leq}\sum_{j=1}^{M^m}D(K\nabla y(s^m_{j}),
    P(s^m_{j}),P(s^m_{j-1})) \nonumber\\
  &\qquad + c\sum_{j=1}^{M^m}|K\nabla y(s_j^m) - K\nabla
    y(s_{j-1}^m)|\, \haz D( P(s_j^m), P(s_{j-1}^m))\nonumber\\
  &\quad \stackrel{\eqref{eq:D11}}{\leq}\mathrm{Diss}_{[0,t]}(K\nabla
    y,P) + c\sum_{j=1}^{M^m}(s_j^m - s_{j-1}^m) D(K\nabla y(s_{j}^m),P(s_j^m), P(s_{j-1}^m))\nonumber\\
  &\quad \leq\mathrm{Diss}_{[0,t]}(K\nabla
    y,P) + c\eta_m \mathrm{Diss}_{[0,t]}(K\nabla
    y,P).\nonumber
\end{align}
In particular, by passing to the $\limsup$ for $m\to \infty $ we get that
\begin{equation}
  \label{eq:per_en60}
  \limsup_{m\to \infty}\sum_{j=1}^{M^m}D(K\nabla y(s^m_{j-1}),
    P(s^m_{j-1}),P(s^m_{j})) \leq \mathrm{Diss}_{[0,t]}(K\nabla
    y,P).
\end{equation}

In order to pass to the limit as $m\to \infty$ in the last term in the
right-hand side of inequality \eqref{eq:oppo3} we write
\begin{align}
  &\sum_{j=1}^{M^m}\la  l(s^m_{j})-l(s^m_{j-1}), y(s^m_{j})\ra- \int_0^t\la \Dot{l}(r),y(r)\ra \,\d r \nonumber\\
  &\quad=\left(\sum_{j=1}^{M^m}  \la l(s_j^m) - l(s_{j-1}^m) - \Dot{l}(s_j^m)(s_j^m - s_{j-1}^m),
    y(s_j^m)\ra\right) \nonumber\\
  &\qquad+ \left(\sum_{j=1}^{M^m}  \la \Dot{l}(s_j^m) (s_j^m - s_{j-1}^m),
    y(s_j^m)\ra  - \int_0^t \la \Dot{l}(r),y(r)\ra \,\d r\right) =:
    I^m_1+ I^m_2.\nonumber
\end{align}
Relation \eqref{eq:per_en4} corresponds to $\lim_{m\to \infty}
I_2^m=0$. On the other hand, one can find $r^m_j \in
(s_{j-1}^m,s_j^m)$ such that 
\begin{align*}
  |I_1^m| = \left|\sum_{j=1}^{M^m}  \la (\Dot{l}(r_j^m)- \Dot{l}(s_j^m))(s_j^m - s_{j-1}^m),
    y(s_j^m)\ra\right| \leq   \omega (\eta_m) t \| y \|_{L^\infty(0,T;\mathcal{Y})}
\end{align*}
where $\omega$ is a modulus of continuity for $\Dot{l}:[0,T]\to
\mathcal{Y}^*$. This implies that $\lim_{m\to \infty}
I_1^m=0$, as well. In particular, we have that
\begin{equation}
  \label{eq:per_en61}
 \lim_{m\to \infty }\sum_{j=1}^{M^m}\la  l(s^m_{j})-l(s^m_{j-1}),
 y(s^m_{j})\ra = \int_0^t\la \Dot{l}(r),y(r)\ra \,\d r.
\end{equation}
Using this and \eqref{eq:per_en60} we can hence pass to
the $\limsup$ as $m\to \infty$ in relation \eqref{eq:oppo3} and obtain
the lower energy estimate \eqref{eq:oppo2}.

This concludes the proof of Theorem \ref{thm:existencemv}.


\section{Proof of Proposition
  \ref{lemma:functions}:  energetic function-valued solutions}\label{proof:lemma:functions}
Following the arguments of Section \ref{sec:convergence}, we find not
relabeled subsequences such that $\ove y_n
\stackrel{\ast}{\rightharpoonup}y$ in
$L^\infty(0,T;\mathcal{Y})$ and $\ove P_n(t)\to P(t)$ in
$L^\infty(\Omega;\Rz^{3\times 3})$ for all $t\in [0,T]$. On the other
hand, given the $L^\infty(0,T;W^{1, q_ r }(\Omega;\Rz^{3\times 3\times
  3}))$ bound, one can assume, with no need of further extractions,
that $\nabla \ove P_n(t) \rightharpoonup \nabla P(t)$ in $L^{ q_ r }(\Omega;\Rz^{3\times 3\times
  3})$ for a.e. $t\in (0,T)$. For all $t\in [0,T]$ one can find
a subsequence $\ove y_{n_k^t}(t)$ (possibly depending on $t$) and
$\tilde y \in  {\mathcal Y}$ such that
$y_{n_k^t}(t)\rightharpoonup\tilde y=:\tilde y (t)$ in $
 {\mathcal Y}$. In particular, one has that
$$\nabla \ove y_{n_k^t} (t)\ove P^{-1}_{n_k^t}(t)\rightharpoonup
\nabla \tilde y(t){P}^{-1}(t)\ \ \text{in} \ \
L^{q_e}(\Omega;\Rz^{3\times 3}).$$
Hence, owing to polyconvexity, for a.e. $t\in(0,T)$ one has
\begin{align}&\int_\Omega W(\nabla \tilde y(t){P}^{-1}(t), P(t),\nabla
P(t))\,\d x-\langle l(t), \tilde y(t)\rangle\nonumber\\
&\quad\leq \liminf_{k\to  \infty}
\int_\Omega W(\nabla \ove y_{n_k^t} (t)\ove P^{-1}_{n_k^t}(t), \ove P_{n_k^t}(t),\nabla
\ove P_{n_k^t}(t))\,\d x-\langle \ove l_{n_k^t}(t), \ove y_{n_k^t}
                                             (t)\rangle.\label{eq:per_stab1}
\end{align}
On the other hand, arguing as in \eqref{eq:dopo7} we have that
\begin{align}
  &D(\overline{(K_{\tau_{n_k^t}}\nabla y)}_{n_k^t}(t-\tau_{n_k^t}),\ove
  P_{n_k^t}(t),\hat P ) \to D(K\nabla
    y(t),P(t),\hat P). \label{eq:per_stab2}
\end{align}
Owing to \eqref{eq:per_stab1}--\eqref{eq:per_stab2} we can pass to
$\liminf$ in the discrete stability \eqref{eq: istab} as $k\to \infty$
and obtain the stability \eqref{eq:stab1}.

Choose now $\hat s=0$, $\hat t =t$, and let $n=n^t_k$ in \eqref{eq:dopo} getting
\begin{align}
    &\int_\Omega W(\nabla \ove y_{n^t_k}( t) \ove P^{-1}_{n^t_k}( t),\ove
  P_{n^t_k}( t),\nabla \ove P_{n^t_k}( t))\,\d x-\langle \ove l_{n^t_k}( t),\ove y_{n^t_k}( t) \rangle
  \nonumber \\
  &\qquad + {\rm Diss}_{[0,\underline{ t}_{n^t_k}]}(\overline{(K_{\tau_{n^t_k}}\nabla
    y)}_{{n^t_k}}(\cdot - \tau _{n^t_k}),\ove P_{{n^t_k}} ) \nonumber\\
  &\quad \leq \int_\Omega W(\nabla y_0 P^{-1}_0 , P_0, \nabla P_0)\,\d x-\langle \ove l_0,
      y_0\rangle  -\int_{0}^{\underline{ t}_{n^t_k}}\la\Dot{l}(r),\ove y_{n^t_k}(r-\tau_{n^t_k})\ra \,\d r.\label{eq:per_en1}
\end{align}

Fix a partition $\{0=s_0<\dots<s_M=t\}$ of $[0,t]$.
The continuity of $D$ ensures that
\begin{align}
  &\sum_{j=1}^M D(K\nabla y(s_j),P(s_j),P(s_{j-1})) \nonumber\\
  &\leq \liminf_{k\to \infty}\sum_{j=1}^M  D(\overline{(K_{\tau_{n_k^t}}\nabla y)}_{n_k^t}(s_j-\tau_{n_k^t}),\ove
    P_{n_k^t}(s_j),\ove P_{n_k^t}(s_{j-1}) ) \nonumber\\
  &\leq \liminf_{k\to \infty} {\rm Diss}_{[0,\underline{ t}_{n^t_k}]}(\overline{(K_{\tau_{n^t_k}}\nabla
    y)}_{{n^t_k}}(\cdot - \tau _{n^t_k}),\ove P_{{n^t_k}} ).\nonumber
\end{align}
By passing to the supremum with respect to the partitions of $[0,t]$ we have hence
checked that
\begin{align}
   {\rm Diss}_{[0,t]}(K\nabla y,P)
  \leq \liminf_{k\to \infty} {\rm Diss}_{[0,\underline{ t}_{n^t_k}]}(\overline{(K_{\tau_{n^t_k}}\nabla
    y)}_{{n^t_k}}(\cdot - \tau _{n^t_k}),\ove P_{{n^t_k}} ). \label{eq:per_en2}
\end{align}
As for the forcing term, by passing to the limit $k\to \infty$ and
using the continuity of $\Dot{l}$ we have 
\begin{align}
  &\int_{0}^{\underline{ t}_{n^t_k}}\la\Dot{l}(r),\ove
  y_{n^t_k}(r-\tau_{n^t_k})\ra \,\d r =\int_{0}^{\tau_{n^t_k}}\la\Dot{l}(r),\ove
  y_{n^t_k}(r-\tau_{n^t_k})\ra \,\d r +\int_{\tau _{n^t_k}}^{\underline{ t}_{n^t_k}}\la\Dot{l}(r),\ove
  y_{n^t_k}(r-\tau_{n^t_k})\ra \,\d r\nonumber\\
  &\quad 
  = \int_0^{\tau_{n^t_k}}\la \Dot{l}(r),y_0\ra \,\d r + \int_{0}^{\underline{ t}_{n^t_k}
  - \tau_{n^t_k} }\la\Dot{l}(r+\tau_{n^t_k}),\ove
  y_{n^t_k}(r)\ra \,\d r \to \int_0^t \la \Dot{l}(r),y(r)\ra \,\d r.\label{eq:per_en3}
\end{align}
Owing to \eqref{eq:per_stab1} and
\eqref{eq:per_en2}--\eqref{eq:per_en3} we can pass to the $\liminf$ as
$k\to \infty$ in the discrete upper energy inequality
\eqref{eq:per_en1} and obtain
\begin{align}
 & \int_\Omega W(\nabla \tilde y(t) P^{-1}(t), P(t),\nabla
                                  P(t))\,\d x-\langle l(t),\tilde
   y(t)\rangle \nonumber  +{\rm
   Diss}_{[0,t]}(K\nabla y, P )\nonumber\\
  &\quad \leq \int_\Omega W(\nabla y_0P^{-1}_0, P_0,\nabla
    P_0)\,\d x -\langle l(0), y_0\rangle 
    -\int_0^t \la \dot l(r),y(r)\ra \,\d r\label{eq:upper_en}
\end{align}
for a.e. $t \in (0,T)$.

\UUU In case $y =\tilde y$ a.e., the \EEE converse inequality follows from the stability
\eqref{eq:stab1}. Indeed, using Lemma \ref{lemma:DalMaso} \UUU we can
find a \EEE sequence of partitions $\{0=s_0^m<\dots
<s_{M^m}^m=t\}$ indexed by $m\in \Nz$ with $\eta_m:=\max_{j=1,\dots,M^m}(s_j^m
- s_{j-1}^m) \to 0$ as $m\to \infty$  such that has the convergence
\eqref{eq:per_en4} and the stability
\eqref{eq:stab1}  holds at each time  $(s^m_{j-1})$
for $j=1,\dots,M^m $ for $m\in \Nz$. By
testing \eqref{eq:stab1} at time $(s^m_{j-1})$ on $(\hat y ,\hat
P)=(\UUU \tilde y \EEE (s^m_{j}),P(s^m_{j}))$
we obtain
\begin{align}
  &\int_\Omega W(\nabla  y(s^m_{j-1}) P^{-1}(s^m_{j-1}), P(s^m_{j-1}),\nabla
                                  P(s^m_{j-1}))\,\d x-\langle l(s^m_{j-1}),
  \UUU \tilde y\EEE(s^m_{j-1})\rangle \nonumber\\
  &\quad \leq \int_\Omega W(\nabla  y(s^m_{j}) P^{-1}(s^m_{j}), P(s^m_{j}),\nabla
                                  P(s^m_{j}))\,\d x-\langle l(s^m_{j}),
   \UUU \tilde y \EEE(s^m_{j})\rangle  \nonumber\\[2mm]
  &\qquad +\la  l(s^m_{j})-l(s^m_{j-1}), \UUU \tilde y\EEE(s^m_{j}) \ra +D(K\nabla y(s^m_{j-1}), P(s^m_{j-1}),P(s^m_{j}))
    .\nonumber
\end{align}
Taking the sum for $j=1,\dots,M^m$ we have
\begin{align}
 &\int_\Omega W(\nabla  y_0 P^{-1}_0, P_0,\nabla
                                  P_0)\,\d x-\langle l(0),
    y_0\rangle  \nonumber\\
  &\quad \leq \int_\Omega W(\nabla  y(t) P^{-1}(t), P(t),\nabla
                                  P(t))\,\d x-\langle l(t),
    \UUU \tilde y\EEE(t)\rangle \nonumber\\
  &\qquad + \sum_{j=1}^{M^m}D(K\nabla y(s^m_{j-1}),
    P(s^m_{j-1}),P(s^m_{j}))+\sum_{j=1}^{M^m}\la
    l(s^m_{j})-l(s^m_{j-1}), \UUU \tilde y\EEE(s^m_{j})
\ra. \label{eq:per_en5}
\end{align}
It suffices now to follow the argument in Subsection
\ref{sec:lower} leading to relations
\eqref{eq:per_en60}--\eqref{eq:per_en61} in order to pass to the
$\limsup$ as $m \to \infty$ in estimate \eqref{eq:per_en5}
and obtain 
\begin{align}
&\int_\Omega W(\nabla y_0P^{-1}_0, P_0,\nabla
                P_0)\,\d x -\langle l(0), y_0\rangle \nonumber\\
  &\quad \leq\int_\Omega W(\nabla \tilde y(t) P^{-1}(t), P(t),\nabla
                                  P(t))\,\d x-\langle l(t),\tilde
    y(t)\rangle   \nonumber\\
  &\qquad +{\rm
   Diss}_{[0,t]}(K\nabla y, P ) 
    -\int_0^t \la \Dot{l}(r),\UUU \tilde y\EEE(r)\ra \,\d r\nonumber
\end{align}
for a.e. $t\in (0,T)$. By combining this with \eqref{eq:upper_en} \UUU
for $y = \tilde y$ a.e., \EEE the
energy balance \UUU \eqref{eq:en100} \EEE follows.

\section{ Proof of Proposition \ref{prop:correspondence}:
  correspondence of solutions}\label{sec:correspondence}

 The first part of the statement follows by concatenating the arguments
of the proofs of Theorem \ref{thm:existencemv} and Proposition
\ref{lemma:functions}. Indeed, let $(y,P,\nu)$ be an energetic
measure-valued solution and $(\ove y_{n},\ove P_{n})$ the underlying
sequence of discrete solutions such that convergences
\eqref{6.14}--\eqref{eq:HellyA} and \eqref{eq:01}--\eqref{eq:03}, see
Section \ref{sec:convergence}. By further extracting from the same
sequence and following the
argument of Section \ref{proof:lemma:functions} we find $\tilde y :
[0,T]\to {\mathcal Y}$ such that $(y,P,\tilde y) $ is an energetic
function-valued solution.

Likewise, let $(y,P,\tilde y) $ be an energetic
function-valued solution   and indicate by  $(\ove
y_{n},\ove P_{n})$ the underlying sequence of  discrete solutions, see Section
\ref{proof:lemma:functions}. Starting from such sequence, one can
further extract and 
follow the argument of Section \ref{sec:convergence}--\ref{sec:lower}
and find an energetic measure-valued solution of the form $(y,P,\nu)$.

Let now $(y,P,\nu)$ and $(y,P,\tilde y)$ be an energetic
measure-valued and an energetic function-valued solution as above. In
particular, assume that these two are generated by the same sequence of
discrete solutions $(\ove y_{n},\ove P_{n})$. 
We are left with proving equality \eqref{eq:rela}.   Using
stability \eqref{def:stability} of the energetic measure-valued solution
 with  $(\hat y,\hat P)=(\tilde y(t),
P(t))$ we bound the measure-valued energy in terms of the energy in
$(\tilde y(t), P(t))$, namely,
\begin{equation}
    \la\!\la\nu_{t},W\ra\!\ra-\langle l(t), y(t)  \rangle \leq \int_\Omega W(\nabla \tilde y(t){P}^{-1}(t), P(t),\nabla P(t))\,\d x-\langle l(t), \tilde y(t)\rangle, 
  \label{eq:PWvsMV}
\end{equation}
for a.e. $ t\in (0,T)$.

The reverse inequality holds, as well. To see
this, we  fix $t\in (0,T)$ such that $\delta$ from
\eqref{eq:delta} is continuous at $t$ (note that this holds a.e.). We  choose $(\hat y,\hat P)=(\ove y_{ n}(\hat t),\ove
P_{ n}(\hat t))$ in the stability
\eqref{eq:stab1}, integrate for $\hat t$ in $(t,t+\epsi)$, and divide by $\epsi$ getting
\begin{align*}
 \quad&\int_\Omega W(\nabla \tilde y(t) P^{-1}(t), P(t),\nabla
  P(t))\,\d x-\langle l(t),\tilde y(t) \rangle \nonumber\\
  &\quad \leq \frac{1}{\epsi}\int_t^{t+\epsi}\!\!\int_\Omega W(\nabla
    \ove y_{ n}(\hat t) {\ove P}^{-1}_{ n}(\hat t),\ove P _{ n}(\hat t),\nabla
    \ove P _{ n}(\hat t))\,\d x\, \d\hat
    t\nonumber\\
  &\qquad -\frac{1}{\epsi}\int_t^{t+\epsi}\langle l(\hat t),\ove
    y_{ n}(\hat t)\rangle \,\d\hat t  +\frac{1}{\epsi}\int_t^{t+\epsi} {\mathcal D}
    (K\nabla y(t),  P(t),\ove P_{ n}(\hat t))\,\d\hat t. 
\end{align*}
Arguing as above, by passing to the limit first as $ n \to \infty$ and
than as $\epsi \to 0$,  and using that
\begin{align*}
  &\lim_{\epsi \to 0}\lim_{n\to \infty}\frac{1}{\epsi}\int_t^{t+\epsi} {\mathcal D}
    (K\nabla y(t),  P(t),\ove P_{ n}(\hat t))\,\d\hat t = \lim_{\epsi \to 0}\frac{1}{\epsi}\int_t^{t+\epsi} {\mathcal D}
  (K\nabla y(t),  P(t), P(\hat t))\,\d\hat t\\
  &\quad \leq \lim_{\epsi \to 0} \lim_{\epsi \to
    0}\frac{1}{\epsi}\int_t^{t+\epsi} {\rm Diss}_{[t,\hat t]}(K\nabla
    y,P) \, \d \hat t \\
  &\quad = \frac{1}{\epsi}\int_t^{t+\epsi} \big(
    \delta(\hat t)- \delta(t) \big) \, \d \hat t \leq \lim_{\epsi \to
    0} \delta(t+\epsi) - \delta (t)=0,
  \end{align*}
 we obtain that 
\begin{align*}
    \int_\Omega W(\nabla \tilde y(t){P}^{-1}(t), P(t),\nabla P(t))\,\d x-\langle l(t), \tilde y(t)\rangle\leq \la\!\la\nu_{t},W\ra\!\ra-\langle l(t),y(t) \rangle,
\end{align*}
for a.e. $t\in (0,T)$. Combined with \eqref{eq:PWvsMV}, this gives
equality \eqref{eq:rela} and concludes the proof.

 \section*{Acknowledgements} This research was funded in
whole or in part by the Austrian Science Fund (FWF) projects 10.55776/F65,  10.55776/I5149,
10.55776/P32788, by the OeAD-WTZ project CZ 09/2023, \UUU and by the
M\v SMT project ERC CZ No. LL2310. \EEE For
open access purposes, the authors have applied a CC BY public copyright
license to any author-accepted manuscript version arising from this
submission.   We thank Martin
Kru\v z\'\i k for pointing \UUU out some relevant references \EEE and for
proposing to discuss linearization, Section
\ref{sec:linearization}. 
 

\appendix

\section{Helly selection principle}

We present a variant of the classical Helly selection principle, which
is used in the compactness proof in Section \ref{sec:convergence}
above. With respect to \cite[Theorem B.5.13]{Mielke2015}, the
dissipation is here time-dependent. We present a lemma in some
generality, by suitably extending the setting of Section
\ref{sec:assumptions} but without introducing new notation. In
particular, for the purposes of this appendix we assume that
\begin{align}
  &\mathcal{F} \ \text{and} \  \mathcal{P} \ \text{are Hausdorff
  topological spaces}, \label{eq:h1}\\
  &\mathcal{D}: \mathcal{F} \times \mathcal{P} \times \mathcal{P} \to
  (0,\infty],\label{eq:h2}
\end{align}

For given trajectories $(\haz F,P):[0,T]\to \mathcal{F} \times
\mathcal{P} $, for all $s,\, t \in [0,T]$
we let
\begin{align*}
  {\rm Diss}_{[s,t]}(\haz F,P) = \sup\left\{\sum_{i=1}^N \mathcal{D}(\haz
  F(t_i), P (t_i), P(t_{i-1})) \ : \ 0=t_0<t_1<\dots<t_N=T \right\}
\end{align*}
where the supremum is taken on all partitions of $[0,T]$.
We assume the following
\begin{align}
  & \text{For all sequentially compact $K\subset \mathcal{P}$}, \
    \forall \haz F \in \mathcal{F} \quad \text{we have:}\nonumber\\
  &  (\haz F_k \stackrel{\mathcal{F}}{\to} \haz F,  \ \ P_k \in K, \ \
    \text{and}\ \ \max\{\mathcal{D}(\haz
    F_k,P_k,P), \mathcal{D}(\haz F_k,P,P_k)\} \to 0 )\nonumber \\
  &\qquad \Rightarrow
    \ \  P_k \stackrel{\mathcal{P}}{\to} P, \label{eq:h4}\\[2mm]
  &  (\haz F_k \stackrel{\mathcal{F}}{\to} \haz F, \ \ P_k
    \stackrel{\mathcal{P}}{\to} P, \ \ \haz P_k
    \stackrel{\mathcal{P}}{\to} \haz P, \ \ P,\,P_k \in K)\ \ 
   \nonumber  \\[1mm]
  &\qquad \Rightarrow \ \ 
      \mathcal{D}(\haz F,P,\haz P) \leq \liminf_{k\to \infty} \mathcal{D}(\haz F_k,P_k,\haz P_k).\label{eq:h5} 
\end{align}

\begin{theorem}[Extended Helly selection principle]\label{thm:Helly}
Assume \eqref{eq:h1}--\eqref{eq:h5} and let $K $ be sequentially
compact in $\mathcal{P}$. Let $\haz F_k:[0,T] \to \mathcal{F}$ and
$P_k:[0,T] \to \mathcal{P}$ be such that
\begin{align}
  & \haz F_k(t) \stackrel{\mathcal{F}}{\to}  \haz F(t) \quad \forall t\in
    [0,T],\label{eq:hh1}\\[1mm]
& P_k(t)\in K \quad \forall\, k\in\Nz, \ \forall t\in[0,T],\label{eq:hh3}\\
  & \sup_{k\in\Nz} 
\mathrm{Diss}_{[0,T]}(\haz F_k, P_k)<\infty. \label{eq:hh4}
\end{align}
Then, there exist a subsequence $(P_{k_\ell})_{\ell\in\Nz}$, a 
function $P:[0,T] \to \mathcal{P}$, and a nondecreasing function $\delta:[0,T] \to [0,\infty)$ with the following properties:
\begin{align}
& 
\mathrm{Diss}_{[0,t]}(\haz
                F_{k_\ell},P_{k_\ell}) \to \delta(t)\quad \forall t\in[0,T], \label{eq:hh3}\\
& {P}_{k_\ell}(t) \stackrel{\mathcal{P}}{\to} P(t) \quad \forall t\in[0,T],\label{eq:hh4}\\[1mm]
&
\mathrm{Diss}_{[s,t]}(\haz F,P)\leq\delta(t)-\delta(s) \quad \forall 0\leq
                                                                                             s\leq
                                                                                             t
                                                                                             \leq
                                                                                             T. \label{eq:hh5}
\end{align}
\end{theorem}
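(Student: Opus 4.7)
The plan is to follow the classical Helly selection scheme, with the added care demanded by the time-dependence of $\haz F$ and the asymmetry of $\mathcal{D}$ in its last two arguments. First I introduce the real-valued, nondecreasing, uniformly bounded functions $\delta_k(t):=\mathrm{Diss}_{[0,t]}(\haz F_k,P_k)$ and invoke the classical Helly principle for monotone real functions to extract, along a subsequence not relabeled, a nondecreasing limit $\delta:[0,T]\to[0,\infty)$ with $\delta_k(t)\to\delta(t)$ for every $t$; this already yields \eqref{eq:hh3}. Let $J\subset[0,T]$ be the at most countable discontinuity set of $\delta$ and fix a countable dense $Q\subset[0,T]$ with $J\cup\{0,T\}\subset Q$. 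Using the sequential compactness of $K\ni P_k(q)$ together with a Cantor diagonal argument, I further extract a subsequence $(P_{k_\ell})$ and produce $P:Q\to K$ with $P_{k_\ell}(q)\stackrel{\mathcal{P}}{\to}P(q)$ for every $q\in Q$.

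Next I extend $P$ to the whole interval. For $t\in[0,T]\setminus Q$ the function $\delta$ is continuous at $t$. Choosing $t_j^\pm\in Q$ with $t_j^-\nearrow t$ and $t_j^+\searrow t$, concatenation of partitions in the definition of $\mathrm{Diss}$ yields the telescoping bounds
\begin{align*}
\mathcal{D}(\haz F_{k_\ell}(t),P_{k_\ell}(t),P_{k_\ell}(t_j^-))&\leq\delta_{k_\ell}(t)-\delta_{k_\ell}(t_j^-),\\
\mathcal{D}(\haz F_{k_\ell}(t_j^+),P_{k_\ell}(t_j^+),P_{k_\ell}(t))&\leq\delta_{k_\ell}(t_j^+)-\delta_{k_\ell}(t).
\end{align*}
By sequential compactness of $K$ I pass to a $t$-dependent sub-subsequence with $P_{k_\ell}(t)\to\tilde P\in K$; then \eqref{eq:hh1}, the lower semicontinuity \eqref{eq:h5}, and continuity of $\delta$ at $t$ let me send $\ell\to\infty$ and then $j\to\infty$ so that the right-hand sides above vanish, delivering one of the two directional dissipation controls demanded by \eqref{eq:h4}. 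Combining with the matching bound $\mathcal{D}(\haz F_{k_\ell}(t_j^-),P_{k_\ell}(t_j^-),P_{k_\ell}(t_m^-))\leq\delta_{k_\ell}(t_j^-)-\delta_{k_\ell}(t_m^-)$ obtained internally to $Q$ (and its symmetric analogue involving the $t_j^+$), the separation property \eqref{eq:h4} forces $\tilde P$ to be independent of the extracted sub-subsequence, so that the whole $(P_{k_\ell}(t))$ converges; setting $P(t):=\tilde P$ produces \eqref{eq:hh4}.

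Finally, for the dissipation estimate \eqref{eq:hh5}, I fix any partition $s=s_0<\dots<s_N=t$ of $[s,t]\subset[0,T]$ and use
$$\sum_{i=1}^{N}\mathcal{D}(\haz F_{k_\ell}(s_i),P_{k_\ell}(s_i),P_{k_\ell}(s_{i-1}))\leq\delta_{k_\ell}(t)-\delta_{k_\ell}(s);$$
passing to $\liminf_\ell$ termwise via \eqref{eq:hh1}, \eqref{eq:h5}, the just-established pointwise convergence of $P_{k_\ell}$, and the pointwise convergence $\delta_{k_\ell}\to\delta$, and then taking the supremum over partitions, yields \eqref{eq:hh5}. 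The main obstacle I anticipate is the identification of $\tilde P$ in the third step: the variation bound supplied by $\mathrm{Diss}$ controls $\mathcal{D}$ in only one ordering of its last two arguments, so that the uniqueness of $\tilde P$ hinges on a careful two-sided approximation by $Q$-points and the simultaneous use of both asymmetric bounds together with the lower semicontinuity \eqref{eq:h5}, exactly the configuration for which the separation assumption \eqref{eq:h4} has been tailored.
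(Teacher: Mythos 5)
Your proof follows essentially the same scheme as the paper's: classical Helly on $\delta_k(t):=\mathrm{Diss}_{[0,t]}(\haz F_k,P_k)$ to get \eqref{eq:hh3}, a diagonal extraction over a countable dense $Q\supset J$ to define $P$ on $Q$, an extension to $[0,T]\setminus Q$ exploiting the continuity of $\delta$ there together with \eqref{eq:h5} and \eqref{eq:h4}, and finally \eqref{eq:hh5} by passing to the $\liminf$ along arbitrary partitions. Your step three is somewhat less tight than the paper's: the paper pins down $\tilde P$ directly by showing $P(t_n)\to P_*$ for every sequence $(t_n)\subset Q$ with $t_n\to t$ (splitting into $t_n\leq t$ and $t_n\geq t$ and invoking \eqref{eq:h4}/\eqref{eq:h5} once), whereas you introduce an extra ``matching bound obtained internally to $Q$'' between $t_j^-$ and $t_m^-$ that is not actually needed and does not obviously contribute to identifying $\tilde P$; the identification already follows from the two one-sided telescoping bounds in the limit $\ell\to\infty$, $j\to\infty$, as in the paper. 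Aside from that redundancy (and the corresponding vagueness about precisely how \eqref{eq:h4} is invoked), the argument is the paper's own, and you correctly flag, as the paper does implicitly, that the asymmetry of $\mathcal D$ means each side of the approximation supplies only one of the two directional controls entering \eqref{eq:h4}.
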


\begin{proof}
  The functions $\delta_k:[0,T]\to[0,\infty)$ given by $\delta_k(t) =
  \mathrm{Diss}_{[0,t]}(\haz F_{k},P_{k})$ are
  nondecreasing. By the classical Helly principle one can extract a
  not relabeled subsequence such that $\delta_k(t) \to \delta(t)$ for
  all $t\in[0,T]$, where $\delta$ is nondecreasing, so that
  \eqref{eq:hh3} holds.

 The function $\delta$ has at most countably many discontinuity points
 $J\subset [0,T]$.  Let $A\subset [0,T]$ be countable
 and dense in $[0,T]$ with $J  \subset A$. For each $t\in
 A$ one has $P_k(t) \in K$ from \eqref{eq:hh3} and one can extract a subsequence, possibly
 depending on $t$, such that $P_{k_j^t}(t)$ converges. We define
 $P(t):=\lim_{j\to \infty} P_{k_j^t}(t)$. As $A$ is countable, by a
 standard diagonal extraction argument we find a subsequence
 $P_{k_\ell}$ such that $P_{k_\ell}(t) \to P(t)$ for all $t\in A$.

 Fix now $t \in [0,T]\setminus A$. As $P_{k_\ell}(t)\in K$ one can
 extract without relabeling in such a way that $P_{k_\ell}(t)\to
 P_*$. We now prove that, for all $(t_n)\in A$ with $t_n \to t$ one
 has $P(t_n)\to P_*$. Assume first that $t_n \leq t$. As
 $\haz F_k(t_n)\to \haz F(t_n)$ from \eqref{eq:hh1} we can use the
 lower semicontinuity
 \eqref{eq:h5} and get
 \begin{align*}
   &\mathcal{D}(\haz F(t_n),P(t_n),P_*)\leq \liminf_{\ell\to
   \infty} \mathcal{D}(\haz
   F_{k_\ell}(t_n),P_{k_\ell}(t_{n}),P_{k_\ell}(t))\\
   &\quad \leq \liminf_{\ell\to
   \infty} \mathrm{Diss}_{[t_{n},t]}(\haz
     F_{k_\ell},P_{k_\ell})\leq \liminf_{\ell\to
   \infty} (\delta_{k_\ell}(t)-\delta_{k_\ell}(t_n)) = \delta(t)-\delta(t_n).
 \end{align*} If
 $t_n \geq t$, by
 exchanging the roles of $P(t_n)$ and $P_*$ we deduce that
 $$\mathcal{D}(\haz F(t_n),P_*,P(t_n))\leq \delta(t_n) - \delta(t).$$
 This entails that 
$$\max \{\mathcal{D}(\haz F(t_n),P(t_n),P_*), \mathcal{D}(\haz
F(t_n),P_*,P(t_n))\} \leq |\delta(t)-\delta(t_n)|.$$
 As $\delta$ is continuous in $t$, by passing to the limit as $n\to
 \infty$ we have that $ |\delta(t)-\delta(t_n)|\to 0$ and assumption \eqref{eq:h4} implies that $P(t_n)\stackrel{\mathcal{P}}{\to} P_*$.
 This in particular ensures that $P_*$ is uniquely
 determined and can hence be used to define $P(t):=P_*$, so that
 $P_{k_\ell}(t)\to P(t)$ for all $t\in [0,T]$, which is \eqref{eq:hh4}.

 Let $\{s=t=_0<t_1<t_N=t\}$ be any partition of the interval
 $[s,t]\subset [0,T]$. We compute
 \begin{align*}
   &\sum_{i=1}^N \mathcal{D}(\haz F(t_i),P(t_i),P(t_{i-1})) \leq
   \liminf_{\ell\to \infty}  \sum_{i=1}^N \mathcal{D}(\haz
     F_{k_\ell}(t_i),P_{k_\ell}(t_i),P_{k_\ell}(t_{i-1})) \\
   &\quad \leq
   \liminf_{\ell\to \infty}  \mathrm{Diss}_{[s,t]}(\haz F_{k_\ell},P_{k_\ell}) =
   \delta(t)- \delta(s).
 \end{align*}
 By passing to the supremum with respect to partitions of $[s,t]$ one
 obtains \eqref{eq:hh5}.
\end{proof}


\section{Riemann sums}

We present a straightforward extension of \cite[Lemma
4.12]{DalMaso2004} on Riemann sums, used in Subsection
\ref{sec:lower} and in Section \ref{proof:lemma:functions}.

\begin{lemma}[Riemann sums]\label{lemma:DalMaso} Let $f\in L^1(0,T;E)$ where $(E, \| \cdot \|)$ is a
  Banach space and let $N \subset (0,T)$ be a null set. Then, there exists
  a sequence of partitions $\{0=s_0^m<\dots<s^m_{M^m}=T\}$ with
  $\max_{j=1,\dots,M^m}(s_j^m-s_{j-1}^m) \to 0$ as $m\to \infty$ such
  that $s_j^m \not \in N$ for all $j=1,\dots,M^m-1$ and
  \begin{equation} \lim_{m\to \infty}\sum_{j=1}^{M^m} \int_{s_{j-1}^m}^{s_j^m} \|
  f(s_j^m) - f(t)\|\, \d t =0.\label{eq:dal0}
  \end{equation}
  In particular, we have that
  $$  \sum_{j=1}^{M^m} (s_{j}^m - s_{j-1}^m)
  f(s_j^m) \to \int_0^T f(t)\, \d t$$
  strongly in $E$ as $m\to \infty$.
\end{lemma}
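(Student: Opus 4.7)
The plan is to combine density of continuous functions in $L^1(0,T;E)$ with a careful Chebyshev-type selection of sample points that simultaneously avoid the null set $N$ and keep the residual $f-\phi$ pointwise controlled. Given $\epsi>0$, I first choose a continuous $\phi:[0,T]\to E$ with $\|f-\phi\|_{L^1(0,T;E)}<\epsi$, together with a modulus $\delta>0$ such that $\|\phi(t)-\phi(s)\|<\epsi/T$ whenever $|t-s|\le\delta$.

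Next, I fix an integer $M$ with $2T/M\le\delta$ and start from the uniform partition $\{jT/M\}_{j=0}^M$. For each interior index $j=1,\dots,M-1$, I consider the pairwise disjoint half-neighbourhoods $J_j:=(jT/M-T/(2M),\, jT/M+T/(2M))$ and select a sample point $s_j^m\in J_j$ fulfilling simultaneously $s_j^m\notin N$ and
\[
\|(f-\phi)(s_j^m)\| \le \frac{2M}{T}\int_{J_j}\|(f-\phi)(\sigma)\|\,\d\sigma.
\]
The second condition is met on a subset of $J_j$ of measure at least $|J_j|/2$ by a Chebyshev-type argument (if the opposite inequality held on more than half of $J_j$, then the integral over $J_j$ would exceed itself), so removing the null set $N\cap J_j$ still leaves positive measure from which to pick $s_j^m$. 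Setting $s_0^m:=0$ and $s_{M^m}^m:=T$ with $M^m:=M$ yields a partition of mesh at most $2T/M\le\delta$ whose interior points avoid $N$.

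The third step is a three-$\epsi$ decomposition $\|f(s_j^m)-f(t)\|\le \|f(s_j^m)-\phi(s_j^m)\|+\|\phi(s_j^m)-\phi(t)\|+\|\phi(t)-f(t)\|$, summed and integrated over $[s_{j-1}^m,s_j^m]$. The middle term is bounded by $(\epsi/T)\sum_j(s_j^m-s_{j-1}^m)=\epsi$ by uniform continuity; the third integrates to $\|f-\phi\|_{L^1}<\epsi$; and the first, using the defining inequality on $s_j^m$, the bound $s_j^m-s_{j-1}^m\le 2T/M$, and disjointness of the $J_j$, is controlled by $4\|f-\phi\|_{L^1}<4\epsi$. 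Altogether the expression in \eqref{eq:dal0} is smaller than $6\epsi$, so taking $\epsi_m=1/m$ produces the required sequence of partitions, whose mesh automatically vanishes as $M\to\infty$.

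The ``in particular'' conclusion follows from the Bochner-integral identity
\[
\int_0^T f(t)\,\d t - \sum_{j=1}^{M^m}(s_j^m-s_{j-1}^m)\,f(s_j^m) = \sum_{j=1}^{M^m}\int_{s_{j-1}^m}^{s_j^m}\bigl(f(t)-f(s_j^m)\bigr)\d t,
\]
whose $E$-norm is dominated by the sum in \eqref{eq:dal0}. The delicate point I foresee is the simultaneous realization of the two requirements on $s_j^m$ (avoiding $N$ \emph{and} enforcing pointwise control of $\|f-\phi\|$); once the Chebyshev-type measure-theoretic selection above is in place, the remainder is a standard three-$\epsi$ splitting.
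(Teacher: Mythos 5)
Your proof takes a genuinely different route from the paper's. The paper treats this lemma as a light modification of the known Riemann-sum result of Dal Maso, Francfort, and Toader: their existing construction already works for all starting points $s^*$ of a quasiuniform partition outside some null set $G$ depending on $f$, and the paper merely enlarges $G$ so that all interior partition points also avoid $N$. Your argument is instead self-contained: approximate $f$ by a continuous $\phi$ in $L^1(0,T;E)$, lay down a uniform grid, and use a Markov/Chebyshev selection within disjoint neighborhoods $J_j$ of the interior grid nodes to pick sample points that both avoid $N$ and carry a pointwise bound on $\|f-\phi\|$ in terms of the local $L^1$ average. The Chebyshev step is sound (the set in $J_j$ where $\|(f-\phi)(\sigma)\|$ exceeds twice its average over $J_j$ has measure at most $|J_j|/2$, so the complement has positive measure and survives excision of $N$), and this sidesteps the Lebesgue-point machinery entirely, yielding a more elementary and transparent derivation.

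There is, however, a genuine gap at the final subinterval. You must set $s_{M^m}^m := T$, and $T$ cannot be moved nor chosen by the Chebyshev criterion. The $j=M$ contribution to your first error term, $\|f(T)-\phi(T)\|\,(T-s_{M-1}^m)$, is therefore not covered by the $4\|f-\phi\|_{L^1}$ bound — that bound only accounts for $j=1,\dots,M-1$ — so the claimed total ``$<6\epsi$'' does not follow from what you have written. The fix is cheap: fix a representative of $f$ with $\|f(T)\|<\infty$ (the Riemann sum in the ``in particular'' statement requires this anyway) and, after choosing $\phi$ and $\delta$, pick $M$ large enough that both $2T/M\le\delta$ and $\|f(T)-\phi(T)\|\cdot 2T/M<\epsi$; the final subinterval then contributes at most an extra $\epsi$, and the argument closes with a bound of $7\epsi$. (The $j=1$ term, by contrast, is already fine, since $s_1^m - s_0^m\le 2T/M$ and the Chebyshev bound does hold at the selected $s_1^m\in J_1$.)
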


\begin{proof} By taking $N = \emptyset$ the statement corresponds to \cite[Lemma
4.12]{DalMaso2004}, which is proved in \cite{DalMaso2004bis} by arguing
on any quasiuniform partition $\{0=s_0^m< s_1^m<\dots,s_{M^m}^m=T\}$
of the following form 
\begin{equation}s_j^m = s^*+ \frac{j}{M^m} \quad \text{for} \ \  \rho_m \leq j \leq
\sigma_m.\label{eq:dal}
\end{equation}
Here, $\rho_m = \min\{z \in \Zz \ : \ 0<s+z/M^m\}$, $\sigma_m= \max\{z
\in \Zz \ : \  s+z/M^m<T\}$, and $s^*$ is any point in $[0,1]\setminus G$, where
$G$ is a null set depending on $f$.

We shall now check that the points of the partition can be chosen not to
belong the null set $N$. To this aim, we define the null sets
$$N_{m,j} = \left(N - \frac{j}{M^m}\right) \cap [0,1]\quad \text{for} \ \  m\in
\mathbb{N}, \ j \in \Zz$$
which are obtained by simply translating $N$ and intersecting it with $[0,1]$. Next we define 
$$\ove G =  \left(\cup_{(m,j)\in \mathbb{N}\times \Zz} N_{m,j}\right)
\cup G$$ 
which is a countable union of null sets, hence again a null set. In particular, one can choose $s^*\in
[0,1]\setminus \ove G \subset [0,1]\setminus   G$, define
$(s_j^m)_{j=1}^{M^m-1} $ as  in \eqref{eq:dal}, and follow the proof
of \cite[Lemma
4.12]{DalMaso2004} from  \cite{DalMaso2004bis} in order to get
\eqref{eq:dal0}. Moreover, starting from such a $s^*$ we have that the
points $s_j^m=s^*+j/{M^m}$ for
all $m\in \mathbb{N}$ and $\rho_m \leq j \leq \sigma_m$ do not belong
to $N$, which proves the assertion.
\end{proof}

\section*{Data availability statement}

No original data has been generated by this research.

\bibliographystyle{acm}

\end{document}